\numberwithin{equation}{section} 
\newcommand{\np}[1]{(#1)}
\newcommand{\nb}[1]{[#1]}
\newcommand{\bp}[1]{\big(#1\big)}
\newcommand{\bb}[1]{\big[#1\big]}
\newcommand{\Bp}[1]{\bigg(#1\bigg)}
\newcommand{\Bb}[1]{\bigg[#1\bigg]}
\newcommand{\calm}{{\mathcal M}}
\newcommand{\calp}{{\mathcal P}}
\newcommand{\calt}{{\mathcal T}}
\newcommand{\calw}{{\mathcal W}}
\newcommand{\R}{\mathbb{R}}
\newcommand{\C}{\mathbb{C}}
\newcommand{\Z}{\mathbb{Z}}
\newcommand{\N}{\mathbb{N}}
\DeclareMathOperator{\e}{e}
\DeclareMathOperator{\Div}{div}
\DeclareMathOperator{\supp}{supp}
\newcommand{\set}[1]{\ensuremath{\{#1\}}}
\newcommand{\setl}[1]{\ensuremath{\bigl\{#1\bigr\}}}
\newcommand{\setc}[2]{\ensuremath{\{#1\ \vert\ #2\}}}
\newcommand{\setcl}[2]{\ensuremath{\bigl\{#1\ \big\vert\ #2\bigr\}}}
\newcommand{\ball}{B}
\newcommand{\proj}{\calp}
\newcommand{\projcompl}{\calp_\bot}
\newcommand{\grp}{G}
\newcommand{\dualgrp}{\widehat{G}}
\newcommand{\torus}{{\mathbb T}}
\newcommand{\Rn}{{\R^n}}
\newcommand{\grad}{\nabla}
\newcommand{\dx}{{\mathrm d}x}
\newcommand{\ds}{{\mathrm d}s}
\newcommand{\dt}{{\mathrm d}t}
\newcommand{\dxi}{{\mathrm d}\xi}
\newcommand{\SR}{\mathscr{S}}
\newcommand{\TDR}{\mathscr{S^\prime}}
\newcommand{\FT}{\mathscr{F}}
\newcommand{\iFT}{\mathscr{F}^{-1}}
\newcommand{\norm}[1]{\lVert#1\rVert}
\newcommand{\norml}[1]{\big\lVert#1\big\rVert}
\newcommand{\normL}[1]{\Bigl\lVert#1\Bigr\rVert}
\newcommand{\snorm}[1]{{\lvert #1 \rvert}}
\newcommand{\snorml}[1]{{\bigl\lvert #1 \big\rvert}}
\newcommand{\LR}[1]{\mathrm{L}^{#1}}
\newcommand{\LRloc}[1]{\mathrm{L}^{#1}_{\mathrm{loc}}} 
\newcommand{\CR}[1]{\mathrm{C}^{#1}}  
\newcommand{\CRi}{\CR \infty}
\newcommand{\CRci}{\CR \infty_0}
\newcommand{\WSR}[2]{\mathrm{W}^{#1,#2}} 
\newcommand{\DSR}[2]{\mathrm{D}^{#1,#2}}
\newcommand{\CRcisigma}{\CR{\infty}_{0,\sigma}}
\newcommand{\vvel}{v}
\newcommand{\vpres}{p}
\newcommand{\Vvel}{V}
\newcommand{\Vpres}{P}
\newcommand{\wvel}{w}
\newcommand{\wpres}{\mathfrak{q}}
\newcommand{\Wvel}{W}
\newcommand{\Wpres}{\mathfrak{Q}}
\newcommand{\twvel}{\tilde{w}}
\newcommand{\uvel}{u}
\newcommand{\upres}{\mathfrak{p}}
\newcommand{\Uvel}{U}
\newcommand{\Upres}{\mathfrak{P}}
\newcommand{\tuvel}{\tilde{u}}
\newcommand{\tupres}{\tilde{\mathfrak{p}}}
\newcommand{\tin}{\text{in }}
\newcommand{\tif}{\text{if }}
\newcommand{\ton}{\text{on }}
\newcommand{\tfor}{\text{for }}
\newcommand{\half}{\frac{1}{2}}
\newcommand{\rey}{\tau}
\newcommand{\per}{\calt}
\newcommand{\perf}{\frac{2\pi}{\per}}
\newcommand{\perft}{\tfrac{2\pi}{\per}}
\newcommand{\eone}{\e_1}
\theoremstyle{definition}
 \newtheorem{Definition}{Definition}[section]
\theoremstyle{plain}
 \newtheorem{Theorem}[Definition]{Theorem}
 \newtheorem{Proposition}[Definition]{Proposition}
 \newtheorem{Lemma}[Definition]{Lemma}
\begin{document}
\title{On the regularity of weak solutions to time-periodic Navier--Stokes equations in exterior domains}

\author{Thomas Eiter%
}

\maketitle

\begin{abstract}
Consider the time-periodic
viscous incompressible fluid flow past a body 
with non-zero velocity at infinity. 
This article gives sufficient conditions 
such that weak solutions to this problem
are smooth.
Since time-periodic solutions 
do not have finite kinetic energy in general,
the well-known regularity results for weak solutions 
to the corresponding 
initial-value problem cannot be transferred directly.
The established regularity criterion
demands a certain integrability of the purely periodic part
of the velocity field or its gradient,
but it does not concern the time mean of these quantities.
\end{abstract}

\noindent
\textbf{MSC2020:} 35B10; 35B65; 35Q30; 76D03; 76D05; 76D07.
\\
\noindent
\textbf{Keywords:} time-periodic solutions; weak solutions; exterior domain; regularity criterion; Serrin condition; Oseen problem.

\section{Introduction}

Consider the time-periodic flow of a viscous incompressible fluid past a three-dimensional body
that translates with constant non-zero velocity $\vvel_\infty$.
We assume $\vvel_\infty$ to be directed along
the $x_1$-axis
such that $\vvel_\infty=\rey\eone$ with $\rey>0$.
In a frame attached to the body, 
the fluid motion 
is then governed by the
Navier--Stokes equations
\begin{subequations}\label{sys:NavierStokesTP}
\begin{alignat}{2}
\partial_t\uvel-\Delta\uvel-\rey\partial_1\uvel+\uvel\cdot\grad\uvel+\grad\upres&=f
&&\qquad\tin\torus\times\Omega, 
\label{sys:NavierStokesTP.a}
\\
\Div\uvel&=0 
&&\qquad\tin\torus\times\Omega, \\
\uvel&=\uvel_\ast 
&&\qquad\ton\torus\times\partial\Omega, \\
\lim_{\snorm{x}\to\infty}\uvel(t,x)&=0 
&&\qquad\tfor t\in\torus,
\end{alignat}
\end{subequations}
where $\Omega\subset\R^3$ is the exterior domain occupied by the fluid.
The functions $\uvel\colon\torus\times\Omega\to\R^3$ and $\upres\colon\torus\times\Omega\to\R$ are velocity and pressure 
of the fluid flow,
$f\colon\torus\times\Omega\to\R^3$ is an external body force,
and $\uvel_\ast\colon\torus\times\partial\Omega\to\R^3$
denotes the velocity field at the boundary.
The time axis is given by the torus group 
$\torus\coloneqq\R/\per\Z$, which 
ensures that all functions appearing in \eqref{sys:NavierStokesTP} 
are time periodic with a prescribed period $\per>0$.

In this article, we study weak solutions to \eqref{sys:NavierStokesTP},
and we provide sufficient conditions 
such that these weak solutions possess more regularity
and are actually smooth solutions.
In the context of the initial-value problem for
the Navier--Stokes equations,
those criteria have been studied extensively. 
Existence of weak solutions 
was shown several decades ago 
in the seminal works by Leray~\cite{Leray_SurLeMouvementLiquidVisqeux_1934}
and Hopf~\cite{Hopf}
together with a corresponding energy inequality,
but it remained unclear for many decades 
whether solutions in this \textit{Leray--Hopf class}
are unique, even when the external forcing 
is smooth (or even $0$).
Note that Albritton, Bru\'e and Colombo~\cite{AlbrittonBrueColombo_NonUniqLeraySolForcedNSE_2022}
recently showed
that there are forcing terms
such that multiple Leray--Hopf solutions
to the initial-value problem exist,
so that uniqueness fails for general forcing terms.
However, 
Leray--Hopf solutions
come along with a weak-strong uniqueness 
principle
that states that weak solutions coincide with strong solutions
if the latter exist.
This also motivated the development of criteria
that ensured higher regularity of weak solutions.
The first results in this direction 
are due to Leray~\cite{Leray_SurLeMouvementLiquidVisqeux_1934},
and Serrin~\cite{Serrin_IntRegWeakSolNSE_1962},
who showed that 
if a weak solution is an element of $\LR{\rho}(0,T;\LR{\kappa}(\Omega)^3)$
for some $\kappa,\rho\in(1,\infty)$ 
such that
$\frac{2}{\rho}+\frac{3}{\kappa}< 1$,
then it is 
a strong solution and smooth with respect to the spatial variables.
Since then, there appeared many other 
regularity criteria 
that ensured higher-order regularity of a weak solution
to the initial-value problem; 
see~\cite{BeiraoDaVeiga_ConcRegProblSolNSE_1995, ChaeChoe_RegSolNSE_1999, SereginSverak_NSELowerBoundsPressure_2002, PenelPokorny_NewRegCritNSEContGradientVel_2004, NeustupaPenel_RegWeakSolNSEOneComonentSpectralProjVorticity_2014, NeustupaYangPenel_RegCritWeakSolNSESpectrProjVortVel_2022}
and the references therein.

To obtain similar regularity results 
for weak solutions to the time-periodic problem~\eqref{sys:NavierStokesTP},
the first idea might be to 
identify these with weak solutions 
to the initial-value problem
for a suitable initial value.
However, this procedure is not successful 
in the considered framework of an exterior domain $\Omega$
since regularity of weak solutions to the initial-value problem 
is usually investigated within the class
$\LR{\infty}(0,T;\LR{2}(\Omega)^3)$,
but weak solutions $\uvel$ to the time-periodic problem
are merely elements of 
$\LR{2}(\torus;\LR{6}(\Omega)^3)$ at the outset; see Definition~\ref{def:WeakSolution_NStp} below.
To see that we cannot expect the same integrability 
as for the initial-value problem,
observe that
every weak solution to the steady-state problem
is also a time-periodic solution.
In general, these steady-state solutions do not have finite kinetic energy 
but only belong to $\LR{s}(\Omega)^3$ for $s>2$;
see Theorem~\ref{thm:statNS} below.
Therefore, 
one cannot reduce the time-periodic situation 
to that of the initial-value problem.

For the formulation of 
suitable regularity criteria 
for time-periodic weak solutions,
we decompose functions into a 
time-independent part, 
given by the time mean over one period,
and a time-periodic remainder part.
To this decomposition, we associate a pair of 
complementary projections $\proj$ and $\projcompl$
such that
\[
\proj\uvel\coloneqq\int_\torus\uvel(t,\cdot)\,\dt,
\qquad
\projcompl\uvel\coloneqq \uvel - \proj\uvel.
\]
Then $\proj\uvel$ is called the \textit{steady-state part} of $\uvel$,
and $\projcompl\uvel$ denotes the \textit{purely periodic part} of $\uvel$.

In this article, we consider weak solutions 
to \eqref{sys:NavierStokesTP}
in the following sense.

\begin{Definition}\label{def:WeakSolution_NStp}
Let $f\in\LRloc{1}(\torus\times\Omega)^3$
and $\uvel_\ast\in\LRloc{1}(\torus\times\partial\Omega)^3$.
A function $\uvel\in\LRloc{1}(\torus\times\Omega)^3$ 
is called \emph{weak solution} to~\eqref{sys:NavierStokesTP}
if is satisfies the following properties:
\begin{enumerate}
\item[i.]
$\grad\uvel\in\LR{2}(\torus\times\Omega)^{3\times3}$, 
$\uvel\in\LR{2}(\torus;\LR{6}(\Omega)^3)$,
$\Div\uvel=0$ in $\torus\times\Omega$,
$\uvel=\uvel_\ast$ on $\torus\times\partial\Omega$,
\label{item:WeakSolution_NStp_L2}
\item[ii.]
$\projcompl\uvel\in\LR{\infty}(\torus;\LR{2}(\Omega))^3$,
\label{item:WeakSolution_NStp_LInfty}
\item[iii.]
the identity
\[
\int_\torus\int_\Omega\bb{-\uvel\cdot\partial_t\varphi
+\grad\uvel:\grad\varphi
-\rey\partial_1\uvel\cdot\varphi
+\np{\uvel\cdot\grad\uvel}\cdot\varphi}\,\dx\dt
=\int_\torus\int_\Omega f\cdot\varphi\,\dx\dt
\]
holds for all test functions $\varphi\in\CRcisigma(\torus\times\Omega)$.
\label{item:WeakSolution_NStp_WeakFormulation}
\end{enumerate}
\end{Definition}

The existence of weak solutions in the sense of Definition~\ref{def:WeakSolution_NStp}
satisfying an associated energy inequality
was shown in~\cite{Kyed_habil} for $\Omega=\R^3$.
Their asymptotic properties as $\snorm{x}\to\infty$
were investigated 
in~\cite{GaldiKyed_TPSolNS3D_AsymptoticProfile,
Eiter_SpatiallyAsymptoticStructureTPsolNSE_2021,
EiterGaldi_SpatialDecayVorticityTP_2021}.
For these results, it was necessary to ensure
higher regularity of the solution 
$\uvel$,
which was done by assuming that 
\begin{equation}
\label{el:serrin.proju}
\projcompl\uvel 
\in \LR{\rho}(\torus;\LR{\kappa}(\Omega)^3)
\end{equation}
holds for some $\kappa=\rho\in(5,\infty)$.
Moreover, it was shown 
in~\cite{Yang_EnergyEqualityWeakTPSolNSE_2022}
that $\uvel$ 
satisfies an energy equality
if
\eqref{el:serrin.proju} holds 
for some $\kappa\in[4,\infty]$ and $\rho\in[2,4]$ with $\frac{2}{\rho}+\frac{2}{\kappa}\leq 1$.
It is remarkable
that in both cases,
the additional integrability is only assumed for the purely periodic part $\projcompl\uvel$,
but not for the whole weak solution $\uvel$ 
as is done for the initial-value problem.
The main result of this article is in the same spirit
and can be seen as an extension of the regularity results used in~\cite{GaldiKyed_TPSolNS3D_AsymptoticProfile,
Eiter_SpatiallyAsymptoticStructureTPsolNSE_2021,
EiterGaldi_SpatialDecayVorticityTP_2021}.
More precisely, we consider the criteria
\begin{alignat}{2}
&\exists\, \kappa,\rho \in (1,\infty) \text{ with }
\frac{2}{\rho}+\frac{3}{\kappa}<1 :  &\quad
\projcompl\uvel&\in\LR{\rho}(\torus;\LR{\kappa}(\Omega)^3),
\label{el:reg.fct}
\\
&\exists\, \kappa,\rho \in (1,\infty) \text{ with }
\frac{2}{\rho}+\frac{3}{\kappa}<2 : &\quad
\grad\projcompl\uvel&\in\LR{\rho}(\torus;\LR{\kappa}(\Omega)^{3\times 3}).
\label{el:reg.grad}
\end{alignat}
If the domain has smooth boundary and the data are smooth,
then both lead to smooth solutions.

\begin{Theorem}\label{thm:smooth}
Let $\Omega\subset\R^3$ be an exterior domain
with boundary of class $\CR{\infty}$,
and let $\rey> 0$. 
Let
$f\in\CRci(\torus\times\Omega)$ and $\uvel_\ast\in\CRi(\torus\times\partial\Omega)$,
and let $\uvel$ be a
weak time-periodic solution to \eqref{sys:NavierStokesTP}
in the sense of Definition \ref{def:WeakSolution_NStp}
such that~\eqref{el:reg.fct} or~\eqref{el:reg.grad} is satisfied.
Then there exists a corresponding pressure field 
$\upres$
such that $(\uvel,\upres)$ is a smooth solution 
to~\eqref{sys:NavierStokesTP}
and
\[
\uvel\in\CRi(\torus\times\overline{\Omega})^3, 
\qquad 
\upres\in\CRi(\torus\times\overline{\Omega})
\]
\end{Theorem}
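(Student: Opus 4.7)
The plan is to exploit the decomposition $\uvel = v + w$ with $v := \proj\uvel$ the time-average and $w := \projcompl\uvel$ the purely periodic part, together with the analogous decompositions of the pressure, force, and boundary data. Averaging the equations of \eqref{sys:NavierStokesTP} over one period produces a stationary Oseen--Navier--Stokes system for $v$,
\begin{equation*}
-\Delta v - \rey\partial_1 v + v\cdot\grad v + \grad \proj\upres = \proj f - \Div\proj(w\otimes w),\qquad \Div v = 0,
\end{equation*}
in $\Omega$ with boundary value $v = \proj\uvel_\ast$ and decay at infinity, while subtracting yields a zero-mean time-periodic problem for $w$ in which the steady velocity $v$ enters only through smooth lower-order coefficients once its regularity has been established.

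The first step is to establish regularity of $v$. From Definition~\ref{def:WeakSolution_NStp}(i) and H\"older's inequality in time one has $v \in \LR{6}(\Omega)^3$ and $\grad v \in \LR{2}(\Omega)^{3\times 3}$, and hypothesis \eqref{el:reg.fct} or \eqref{el:reg.grad} ensures that $\proj(w\otimes w)$ sits in a Lebesgue space whose divergence is an admissible right-hand side for the stationary Oseen--Navier--Stokes theory. Theorem~\ref{thm:statNS} then yields that $v$ and $\proj\upres$ are smooth on $\overline{\Omega}$.

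The second and main step concerns $w$. With $v$ smooth, the equation for $w$ is a time-periodic Oseen system perturbed by the smooth first-order terms $v\cdot\grad w + w\cdot\grad v$ and by the genuine nonlinearity $\projcompl(w\cdot\grad w)$. Under \eqref{el:reg.fct} (and the condition \eqref{el:reg.grad} is reducible to \eqref{el:reg.fct} via Sobolev embedding applied to $w$, which has zero spatial mean on $\torus$), I would adapt the classical Serrin--Beir\~{a}o da Veiga scheme to the time-periodic setting: using $w \in \LR{\infty}(\torus;\LR{2}(\Omega)^3)$ from Definition~\ref{def:WeakSolution_NStp}(ii), one tests the equation with suitable powers of $w$ and absorbs the convective term by H\"older and Young. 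Bootstrapping through the maximal $\LR{q}$-regularity of the time-periodic Oseen operator on the exterior domain successively raises $w$ to $\LR{q}(\torus;\WSR{2}{q}(\Omega)^3) \cap \WSR{1}{q}(\torus;\LR{q}(\Omega)^3)$ for all $q\in(1,\infty)$; differentiating the equations and iterating, combined with boundary regularity for the time-periodic Stokes problem near $\partial\Omega$, then delivers $\uvel,\upres \in \CRi(\torus\times\overline{\Omega})$.

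The main obstacle is precisely the one that invalidates a naive reduction to the initial-value Serrin theory: $\uvel$ itself is only in $\LR{2}(\torus;\LR{6}(\Omega)^3)$ and need not have finite kinetic energy. The decomposition is what rescues the argument, since the steady part is handled by exterior-domain stationary theory while the purely periodic part inherits the $\LR{\infty}_t\LR{2}_x$ bound. The delicate point is to control the mixed terms $v\cdot\grad w + w\cdot\grad v$ in the equation for $w$ during the bootstrap, for which one must work within a genuine time-periodic $\LR{q}$-theory for the Oseen operator on the exterior domain rather than the usual parabolic initial-value framework, and to verify that the starting exponent provided by \eqref{el:reg.fct} or \eqref{el:reg.grad} is high enough to launch the iteration.
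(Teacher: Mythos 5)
Your overall strategy---decompose $\uvel=\vvel+\wvel$ and bootstrap $\wvel$ through the maximal $\LR{r}(\torus;\LR{q})$-regularity of the time-periodic Oseen operator---is the right one and matches what the paper does to establish the intermediate Theorem~\ref{thm:regularity}. But there are genuine gaps in the passage to $\CRi$-smoothness, which is the actual content of Theorem~\ref{thm:smooth}. First, the two parts cannot be decoupled in the order you propose. The steady equation for $\vvel$ has right-hand side $\proj f-\proj(\wvel\cdot\grad\wvel)$, and Theorem~\ref{thm:statNS} returns only $\DSR{2}{q}$-type regularity (more only if the right-hand side lies in $\WSR{1}{q}$), so smoothness of $\vvel$ presupposes high regularity of $\wvel$; conversely the equation for $\wvel$ contains $\vvel\cdot\grad\wvel+\wvel\cdot\grad\vvel$. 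The regularities of $\vvel$ and $\wvel$ must be raised \emph{simultaneously}, which is precisely the intertwined iteration of Lemmas~\ref{lem:wgradw}--\ref{lem:wgradw.betterreg} in the paper; ``Theorem~\ref{thm:statNS} then yields that $v$ is smooth'' is not available at the point where you invoke it.

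Second, and more seriously, ``differentiating the equations and iterating'' does not close at integer time-derivative steps. From $\wvel\in\calw_{q,r}$ for all $q,r$ one controls $\partial_t\wvel$ and $\grad^2\wvel$ in $\LR{r}(\torus;\LR{q}(\Omega))$, but \emph{not} $\grad\partial_t\wvel$; hence the term $\uvel\cdot\grad\partial_t\uvel$ arising in the time-differentiated equation is not an admissible right-hand side and the induction is circular. The paper's resolution---arguably the main idea of the proof of Theorem~\ref{thm:smooth}---is to gain time regularity in \emph{half}-derivative steps: Theorem~\ref{thm:embedding} yields $\sqrt{D}_t\grad\uvel\in\LR{r}(\torus;\LR{q}(\Omega))$, one verifies that $\sqrt{D}_t^{k+1}\wvel$ is again a weak solution of a Navier--Stokes-type system whose forcing lies in all $\LR{r}(\torus;\LR{q}(\Omega)^3)$ and which satisfies \eqref{el:reg.fct} and \eqref{el:reg.grad}, and then re-applies Theorem~\ref{thm:regularity}; only afterwards is spatial smoothness obtained from elliptic Stokes regularity at a.e.\ fixed time. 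Nothing in your sketch substitutes for this device. Finally, your parenthetical claim that \eqref{el:reg.grad} reduces to \eqref{el:reg.fct} by Sobolev embedding is valid only for $\kappa<3$; for $\kappa\geq 3$ (say $\kappa=6$ with $\tfrac{2}{\rho}$ close to $\tfrac{3}{2}$) interpolation with $\wvel\in\LR{\infty}(\torus;\LR{2}(\Omega)^3)$ does not bring the Serrin number of $\wvel$ below $1$, so the two criteria must be treated in parallel, as is done in Lemma~\ref{lem:wgradw.betterreg}.
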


As an intermediate step, we show the following result
that also gives assumes less smooth data.

\begin{Theorem}\label{thm:regularity}
Let $\Omega\subset\R^3$ be an exterior domain
with boundary of class $\CR{2}$,
and let $\rey> 0$. 
Let $f$ and $\uvel_\ast$ be such that
\begin{subequations}
\label{cond:data}
\begin{align}
&\forall q,r\in(1,\infty): \ f\in\LR{r}(\torus;\LR{q}(\Omega)^3), 
\label{cond:f}
\\
&\uvel_\ast\in\CR{}\np{\torus;\CR{2}(\partial\Omega)^3}
\cap\CR{1}\np{\torus;\CR{}(\partial\Omega)^3}. 
\label{cond:BoundaryData}
\end{align}
\end{subequations}
Let $\uvel$ be a
weak time-periodic solution to \eqref{sys:NavierStokesTP}
in the sense of Definition \ref{def:WeakSolution_NStp}
such that 
\eqref{el:reg.fct} or \eqref{el:reg.grad} is satisfied.
Then $\vvel\coloneqq\proj\uvel$ and $\wvel\coloneqq\projcompl\uvel$ satisfy
\begin{align}
\forall s_2\in(1,\frac{3}{2}]:\, \quad
&\vvel\in\DSR{2}{s_2}(\Omega)^3,\ 
\label{el:reg.ss2}
\\
\forall s_1\in(\frac{4}{3},\infty]:\, \quad
&\vvel\in\DSR{1}{s_1}(\Omega)^3, \ 
\label{el:reg.ss1}
\\
\forall s_0\in(2,\infty]:\, \quad
&\vvel\in\LR{s_0}(\Omega)^3,
\label{el:reg.ss0}
\\
\forall q,r\in(1,\infty): \ \quad
&\wvel\in \WSR{1}{r}(\torus;\LR{q}(\Omega)^3)\cap\LR{r}(\torus;\WSR{2}{q}(\Omega)^3),
\label{el:reg.pp}
\end{align}
and there exists a pressure field $\upres\in\LRloc{1}(\torus\times\Omega)$ with 
$\vpres\coloneqq\proj\upres$ and $\wpres\coloneqq\projcompl\upres$
such that
\begin{equation}\label{el:reg.pres}
\forall s_2\in(1,\frac{3}{2}]:\
\vpres\in\DSR{2}{s_2}(\Omega)^3,
\qquad
\forall q,r\in(1,\infty): \ \grad\wpres\in\LR{r}(\torus;\LR{q}(\Omega)^3)
\end{equation}
and \eqref{sys:NavierStokesTP} 
is satisfied in the strong sense.

Additionally, 
if $\Omega$ has $\CR{3}$-boundary,
and if $\proj f\in\WSR{1}{q}(\Omega)^3$
and $\proj\uvel_\ast\in\WSR{3-1/q_1}{q_1}(\partial\Omega)^3$
for some $q_1\in(3,\infty)$, 
then 
\begin{equation}
\label{el:reg:ss2.full}
\forall s_2\in(1,\infty):\, \quad
\vvel\in\DSR{2}{s_2}(\Omega)^3,
\quad
\vpres\in\DSR{1}{s_2}(\Omega). 
\end{equation}
\end{Theorem}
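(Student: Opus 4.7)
The plan is to split $\uvel=\vvel+\wvel$ into its time-mean $\vvel=\proj\uvel$ and purely periodic part $\wvel=\projcompl\uvel$, and to bootstrap regularity for these two components separately via stationary Oseen theory and time-periodic Oseen maximal regularity.

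As a preliminary reduction, I would show that \eqref{el:reg.grad} implies a version of \eqref{el:reg.fct}. Combining $\grad\wvel\in\LR{\rho}(\torus;\LR{\kappa}(\Omega))$ with the baseline $\wvel\in\LR{2}(\torus;\LR{6}(\Omega)^3)$, the Sobolev embedding (when $\kappa<3$) or straightforward interpolation (when $\kappa\geq 3$) produces new exponents $(\rho',\kappa')$ with $\frac{2}{\rho'}+\frac{3}{\kappa'}<1$. Hence I may work under \eqref{el:reg.fct} throughout.

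Next, applying $\projcompl$ to \eqref{sys:NavierStokesTP.a} yields a time-periodic Oseen system for $\wvel$ with source $\projcompl f-\projcompl(\uvel\cdot\grad\uvel)$ and boundary datum $\projcompl\uvel_\ast$. Using \eqref{el:reg.fct}, Sobolev embeddings, and $\grad\uvel\in\LR{2}(\torus\times\Omega)$, the convective term can first be placed in some mixed space $\LR{r_0}(\torus;\LR{q_0}(\Omega)^3)$. Time-periodic $\LR{r}$--$\LR{q}$ maximal regularity for the Oseen problem in exterior domains then produces an improved $\wvel$, which in turn improves the integrability of the nonlinearity. Iterating finitely many times one reaches \eqref{el:reg.pp} for arbitrary $q,r\in(1,\infty)$, and the same maximal regularity theorem simultaneously delivers a pressure $\wpres$ with $\grad\wpres\in\LR{r}(\torus;\LR{q}(\Omega)^3)$, yielding the $\wpres$-part of \eqref{el:reg.pres}.

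Applying $\proj$ to \eqref{sys:NavierStokesTP.a} gives a stationary Navier--Stokes system for $\vvel$ with source $\proj f-\proj(\wvel\cdot\grad\wvel)$ and boundary datum $\proj\uvel_\ast$. By \eqref{cond:f} and \eqref{el:reg.pp}, this source lies in $\LR{q}(\Omega)^3$ for every $q\in(1,\infty)$. The regularity theory for the steady Oseen/Navier--Stokes problem in exterior domains (Theorem~\ref{thm:statNS}) then supplies \eqref{el:reg.ss0}, \eqref{el:reg.ss1}, \eqref{el:reg.ss2} together with the $\vpres$-part of \eqref{el:reg.pres}. The additional assertion \eqref{el:reg:ss2.full} follows by invoking the higher-regularity variant of the same theory under a $\CR{3}$-boundary, using the refined hypotheses on $\proj f$ and $\proj\uvel_\ast$ and noting that $\proj(\wvel\cdot\grad\wvel)$ is already smoother than required thanks to \eqref{el:reg.pp}.

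The main obstacle is the bootstrap on the time-periodic Oseen system: the integrability indices must be tracked carefully through H\"older's inequality and Sobolev embedding at each iteration and kept inside the admissible range of the maximal regularity theorem. Since the Helmholtz projection and the Stokes resolvent in exterior domains are not bounded at the $\LR{1}$ or $\LR{\infty}$ endpoints, arbitrary integrability of $\wvel$ is attained only by finitely many bootstrap steps rather than by a single application of the linear theory.
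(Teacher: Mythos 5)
Your overall architecture (decompose into $\vvel=\proj\uvel$ and $\wvel=\projcompl\uvel$, feed the purely periodic part into time-periodic Oseen maximal regularity and the steady part into Theorem~\ref{thm:statNS}, and bootstrap) is the same as the paper's. But your very first reduction --- that \eqref{el:reg.grad} implies a condition of type \eqref{el:reg.fct} by Sobolev embedding or ``straightforward interpolation'' --- is false for $\kappa>3$, and this is where the two hypotheses genuinely differ. Take $\kappa=6$, $\rho=\tfrac32$, so that $\tfrac{2}{\rho}+\tfrac{3}{\kappa}=\tfrac{11}{6}<2$ and \eqref{el:reg.grad} is admissible. The only information available on $\wvel$ itself at this stage is the energy-class bound $\wvel\in\LR{\infty}(\torus;\LR{2}(\Omega)^3)\cap\LR{2}(\torus;\LR{6}(\Omega)^3)$, whose Serrin number is $\tfrac32$. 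Interpolating it with $\grad\wvel\in\LR{3/2}(\torus;\LR{6}(\Omega)^{3\times3})$ via Gagliardo--Nirenberg with weight $\theta$ on the gradient yields $\tfrac{3}{\kappa'}=\tfrac32-2\theta$ and $\tfrac{2}{\rho'}=\tfrac{4\theta}{3}$, hence a Serrin number $\tfrac{2}{\rho'}+\tfrac{3}{\kappa'}=\tfrac32-\tfrac{2\theta}{3}$, while the constraint $\tfrac{3}{\kappa'}\geq 0$ forces $\theta\leq\tfrac34$; so the Serrin number never drops below $1$, and no exponents with $\tfrac{2}{\rho'}+\tfrac{3}{\kappa'}<1$ are reachable. (Your Sobolev-embedding argument does work when $\kappa<3$, but \eqref{el:reg.grad} deliberately allows large $\kappa$ with $\rho$ close to $1$, where no such reduction exists.) The paper avoids this by never converting the gradient condition into a velocity condition: in Lemma~\ref{lem:wgradw.betterreg} the hypothesis \eqref{el:reg.grad} is used \emph{directly} in the H\"older estimate for the product $\wvel\cdot\grad\wvel$, paired with integrability of $\wvel$ coming from the parabolic embedding of $\calw_{q,r}$ (Theorem~\ref{thm:embedding}); the point is that subcriticality is needed only for the product, where the two factors' deficits add, not for $\wvel$ alone.

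Two further places where your outline hides real difficulties. First, the $\wvel$- and $\vvel$-bootstraps cannot be run sequentially as you propose: the Oseen system for $\wvel$ contains $\wvel\cdot\grad\vvel$, and at the outset $\grad\vvel$ is only known in $\LR{s_1}(\Omega)$ for $s_1\leq 3$; pushing $\grad\vvel$ into $\LR{s_1}$ for all $s_1\in(\tfrac43,\infty]$ requires $\proj(\wvel\cdot\grad\wvel)\in\LR{q}(\Omega)$ for some $q>3$, which in turn requires the $\wvel$-bootstrap to have progressed. The paper's proof therefore alternates between the two systems (this is why Lemma~\ref{lem:vgradwwgradv} distinguishes the cases $q\leq3$ and $q>3$). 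Second, mixed-norm ($r\neq q$) maximal regularity for the time-periodic \emph{Oseen} problem in an exterior domain is not an off-the-shelf result; Lemma~\ref{lem:OseenReg} has to manufacture it by combining mixed-norm Stokes maximal regularity with the $\LR{q}(\torus\times\Omega)$ Oseen theory and the embedding theorem. Neither of these is fatal to the strategy, but the first paragraph's gap is: without a correct treatment of \eqref{el:reg.grad}, the theorem is proved only under \eqref{el:reg.fct}.
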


Comparing the regularity criteria of Theorem~\ref{thm:smooth} and Theorem~\ref{thm:regularity}
with those used in~\cite{GaldiKyed_TPSolNS3D_AsymptoticProfile,
Eiter_SpatiallyAsymptoticStructureTPsolNSE_2021,
EiterGaldi_SpatialDecayVorticityTP_2021},
we see that the present article extends them
in two directions.
Firstly, by~\eqref{el:reg.fct} we extend the range of 
admissible parameters $\rho$, $\kappa$
in the sufficient condition~\eqref{el:serrin.proju}
by also allowing the mixed case $\rho\neq\kappa$.
Secondly,~\eqref{el:reg.grad}
is an alternative condition on certain integrability of 
the purely periodic part of the gradient 
$\grad\uvel$.
In particular, we can replace 
the assumption~\eqref{el:serrin.proju}
for some $\kappa=\rho\in(5,\infty)$
with one of the 
assumptions~\eqref{el:reg.fct}
or~\eqref{el:reg.grad}
in the main results of~\cite{GaldiKyed_TPSolNS3D_AsymptoticProfile,
Eiter_SpatiallyAsymptoticStructureTPsolNSE_2021,
EiterGaldi_SpatialDecayVorticityTP_2021},
and the results on the spatially asymptotic behavior
of the velocity and the vorticity field
derived there
are also valid under the alternative 
regularity criteria~\eqref{el:reg.fct} or \eqref{el:reg.grad}.

In Section~\ref{sec:Notation}
we next introduce the general notation used in this article.
In Section~\ref{sec:transference+embedding}
we recall the notion of Fourier multipliers in spaces with mixed Lebesgue norms
and introduce a corresponding transference principle,
from which we derive an embedding theorem.
Section~\ref{sec:prelimreg}
recalls a well-known regularity result for the steady-state Navier--Stokes equations, 
and it contains a similar result for the time-periodic Oseen problem,
which is a linearized version of \eqref{sys:NavierStokesTP}.
Finally, Theorem~\ref{thm:smooth} and Theorem~\ref{thm:regularity} 
will be proved in Section~\ref{sec:regularity}.

\section{Notation}
\label{sec:Notation}

For the whole article, the time period $\per>0$ is a fixed constant,
and $\torus\coloneqq\R/\per\Z$ denotes the corresponding torus group,
which severs as the time axis.
The spatial domain is usually given by a three-dimensional exterior domain $\Omega\subset\R^3$,
that is, the domain $\Omega$ is the complement of a compact connected set.
We write $\partial_t\uvel$ and $\partial_j\uvel\coloneqq\partial_{x_j}\uvel$
for partial derivatives with respect to time and space,
and we set $\Delta\uvel\coloneqq\partial_j\partial_j\uvel$ 
and $\Div\uvel\coloneqq\partial_j\uvel_j$,
where we used Einstein's summation convention.

We equip the compact abelian group $\torus$ with the normalized Lebesgue measure given by
\[
\forall f\in\CR{}(\torus):\qquad
\int_\torus f(t)\,\dt=\frac{1}{\per}\int_0^\per f(t)\,\dt,
\]
and the group $\Z$,
which can be identified with the dual group of $\torus$, with the counting measure. 
The Fourier transform $\FT_\grp$ on the locally compact group $\grp\colon\torus\times\Rn$, $n\in\N_0$,
and its inverse $\iFT_\grp$ are formally given by
\[
\begin{aligned}
\FT_\grp\nb{f}(k,\xi)
&\coloneqq\int_\torus\int_{\Rn} f(t,x)\e^{-i\perf kt-ix\cdot\xi}\,\dx\dt,\\
\iFT_\grp\nb{f}(t,x)
&\coloneqq\sum_{k\in\Z}\int_{\Rn} f(k,\xi)\e^{i\perf kt+ix\cdot\xi}\,\dxi,
\end{aligned}
\]
where the Lebesgue measure $\dxi$ is normalized appropriately
such that $\FT_\grp\colon\SR(\grp)\to\SR(\dualgrp)$ defines an isomorphism with inverse $\iFT_\grp$.
Here $\SR(\grp)$ is the so-called Schwartz--Bruhat space, which is a generalization 
of the classical Schwartz space in the Euclidean setting;
see \cite{Bruhat61,EiterKyed_tplinNS_PiFbook}.
By duality this induces an isomorphism $\FT_\grp\colon\TDR(\grp)\to\TDR(\dualgrp)$
of the dual spaces $\TDR(\grp)$ and $\TDR(\dualgrp)$,
the corresponding spaces of tempered distributions.

By $\LR{q}(\Omega)$
and $\WSR{m}{q}(\Omega)$ as well as
$\LR{q}(\torus\times\Omega)$
and $\WSR{m}{q}(\torus\times\Omega)$
we denote the classical Lebesgue and Sobolev spaces.
We define homogeneous Sobolev spaces by 
\[
\DSR{m}{q}(\Omega)
\coloneqq\setcl{\uvel\in\LRloc{1}(\Omega)}{\grad^m \uvel\in\LR{q}(\Omega)}, 
\] 
where $\grad^m\uvel$ denotes the collection of all (spatial) weak derivatives 
of $\uvel$ of $m$-th order.
We further set
\[
\CRcisigma(\Omega)\coloneqq\setc{\varphi\in\CRci(\Omega)^3}{\Div\varphi=0},
\]
where $\CRci(\Omega)$ is the class of 
compactly supported smooth functions on $\Omega$.
For $q\in[1,\infty]$ and a (semi-)normed vector space $X$,
$\LR{q}(\torus;X)$ denotes the corresponding Bochner-Lebesgue space on $\torus$,
and
\[
\WSR{1}{q}(\torus;X)
\coloneqq
\setcl{\uvel\in\LR{q}(\torus;X)}
{\partial_t\uvel\in\LR{q}(\torus;X)}.
\]
The projections
\[
\proj f\coloneqq \int_\torus f(t)\,\dt, 
\qquad \projcompl f\coloneqq f-\proj f
\] 
decompose $f\in\LR{1}\np{\torus;X}$ 
into a time-independent \emph{steady-state} part $\proj f$
and a \emph{purely periodic} part $\projcompl f$.

We further study the fractional time derivative 
$D_t^\alpha$ for $\alpha\in(0,\infty)$,
which is defined by
\[
D_t^\alpha\uvel(t)\coloneqq \iFT_\torus\bb{\snorm{\perft k}^\alpha\FT_\torus\nb{\uvel}}(t)
=\sum_{k\in\Z} \snorm{\perft k}^\alpha\uvel_k\e^{i\perf kt}
\]
for $\uvel\in\SR(\torus)$.
By Plancherel's theorem,
one readily verifies the integration-by-parts formula
\begin{equation}
\label{eq:fracder.ibp}
\int_\torus D_t^\alpha\uvel\, \vvel\,\dx
=\int_\torus\uvel\,D_t^\alpha\vvel\,\dx
\end{equation}
for all $\uvel,\vvel\in\SR(\torus)$.
By duality, $D_t^\alpha$ extends to an operator 
on the distributions $\TDR(\torus)$.
Note that in general we have $D_t^\alpha\uvel\neq\partial_t^\alpha\uvel$ for $\alpha\in\N$,
but it holds
\[
D_t^\alpha\uvel\in\LR{p}(\torus)
\iff 
\partial_t^\alpha\uvel\in\LR{p}(\torus)
\]
for $\alpha\in\N$ and $p\in(1,\infty)$.
If $\alpha=j/2$ for some $j\in\N$,
we usually write
$\sqrt{D}_t^j\uvel\coloneqq D_t^{j/2}\uvel$.

\section{Transference principle and embedding theorem}
\label{sec:transference+embedding}

To analyze mapping properties of the fractional derivative and other operators,
we need the notion of Fourier multipliers
on the locally compact abelian group
$\grp=\torus\times\Rn$ for $n\in\N_0$.
We are interested in multipliers that induce 
bounded operators between mixed-norm spaces
of the form $\LR{p}(\torus;\LR{q}(\Rn))$
for $p,q\in(1,\infty)$.
We call 
$M\in\LR{\infty}(\Z\times\Rn)$
an \textit{$\LR{p}(\torus;\LR{q}(\Rn))$-multiplier}
if there is $C>0$ such that
\[
\forall\uvel\in\SR(\torus\times\Rn):
\quad
\norml{\iFT_{\torus\times\Rn}\bb{M\,\FT_{\torus\times\Rn}\nb{\uvel}}}_{\LR{p}(\torus;\LR{q}(\Rn))}
\leq C \norm{\uvel}_{\LR{p}(\torus;\LR{q}(\Rn))},
\]
and we call 
$m\in\LR{\infty}(\R\times\Rn)$
an \textit{$\LR{p}(\R;\LR{q}(\Rn))$-multiplier}
if there is $C>0$ such that
\[
\forall\uvel\in\SR(\R\times\Rn):
\quad
\norml{\iFT_{\R\times\Rn}\bb{m\,\FT_{\R\times\Rn}\nb{\uvel}}}_{\LR{p}(\R;\LR{q}(\Rn))}
\leq C \norm{\uvel}_{\LR{p}(\R;\LR{q}(\Rn))}.
\]
The smallest such constant $C$ is denoted by 
$\norm{M}_{\calm_{p,q}(\torus\times\Rn)}$ 
and $\norm{m}_{\calm_{p,q}(\R\times\Rn)}$
and called the multiplier norm of $M$ and $m$,
respectively.
The following transference principle enables us to reduce multipliers on $\torus\times\Rn$ to 
multipliers on $\R\times\Rn$.

\begin{Proposition}
\label{prop:transference}
Let $p,q\in(1,\infty)$,
and let $m\in\CR{}(\torus\times\Rn)$ be an $\LR{p}(\R;\LR{q}(\Rn))$-multiplier.
Then $M\coloneqq m|_{\Z\times\Rn}$
is an $\LR{p}(\torus;\LR{q}(\Rn))$-multiplier
with norm
\[
\norm{M}_{\calm_{p,q}(\torus\times\Rn)}
\leq \norm{m}_{\calm_{p,q}(\R\times\Rn)}
\]
\end{Proposition}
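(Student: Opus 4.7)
The plan is to carry out a de~Leeuw-type transference argument adapted to the mixed-norm setting. The core idea is to embed a time-periodic input into a temporally decaying function on \(\R\), apply the continuous-time multiplier \(m\), and recover the discrete-time multiplier \(M\) with the same norm bound in a concentration limit.

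First I would reduce to the dense subclass of trigonometric polynomials with Schwartz spatial coefficients,
\[
u(t,x)=\sum_{\snorm{k}\leq N}u_k(x)\,\e^{i\perf kt},\qquad u_k\in\SR(\Rn),
\]
which are dense in \(\LR{p}(\torus;\LR{q}(\Rn))\). I then fix a Schwartz cutoff \(g\in\SR(\R)\) with \(g(0)=1\) and \(\norm{g}_{\LR{p}(\R)}>0\) (e.g.\ a Gaussian), set \(g_\epsilon(t)\coloneqq g(\epsilon t)\), and define \(U_\epsilon(t,x)\coloneqq u(t,x)\,g_\epsilon(t)\in\SR(\R\times\Rn)\). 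Since \(\FT_\R[g_\epsilon](\tau)=\epsilon^{-1}\hat g(\tau/\epsilon)\), the partial Fourier transform of \(U_\epsilon\) in time concentrates on the frequencies \(\tau=\perft k\) at scale \(\epsilon\); applying the operator associated with \(m\) and substituting \(\tau=\perft k+\epsilon\sigma\) gives
\[
(T_m U_\epsilon)(t,x)=\sum_{\snorm{k}\leq N}\e^{i\perf kt}\int_{\R\times\Rn}m(\perft k+\epsilon\sigma,\xi)\,\widehat{u_k}(\xi)\,\hat g(\sigma)\,\e^{i\epsilon t\sigma+ix\cdot\xi}\,\mathrm d\sigma\,\mathrm d\xi.
\]

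By continuity of \(m\) at the finitely many points \(\perft k\) together with dominated convergence (using the Schwartz decay of \(\hat g\) and \(\widehat{u_k}\)), this decomposes as \(T_m U_\epsilon=(T_M u)\,g_\epsilon+R_\epsilon\) with an error satisfying \(\norm{R_\epsilon}_{\LR{p}(\R;\LR{q}(\Rn))}=o(\epsilon^{-1/p})\). Independently, for any continuous \(\per\)-periodic \(\LR{q}(\Rn)\)-valued function \(v\), the substitution \(s=\epsilon t\) combined with the weak-\(\ast\) convergence in \(\LR{\infty}(\R)\) of \(s\mapsto\norm{v(s/\epsilon,\cdot)}_{\LR{q}(\Rn)}^p\) to its torus mean yields the scaling identity
\[
\lim_{\epsilon\to 0}\epsilon^{1/p}\norml{v\,g_\epsilon}_{\LR{p}(\R;\LR{q}(\Rn))}=\norm{g}_{\LR{p}(\R)}\norm{v}_{\LR{p}(\torus;\LR{q}(\Rn))}.
\]
Applying this identity to \(v=u\) and to \(v=T_M u\), inserting the remainder estimate, and passing to the limit in the hypothesis
\[
\norm{T_m U_\epsilon}_{\LR{p}(\R;\LR{q}(\Rn))}\leq\norm{m}_{\calm_{p,q}(\R\times\Rn)}\,\norm{U_\epsilon}_{\LR{p}(\R;\LR{q}(\Rn))}
\]
after multiplying through by \(\epsilon^{1/p}\) produces the desired multiplier-norm inequality, since the common factor \(\norm{g}_{\LR{p}(\R)}\) cancels.

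I expect the main obstacle to be making the remainder estimate \(\norm{R_\epsilon}_{\LR{p}(\R;\LR{q}(\Rn))}=o(\epsilon^{-1/p})\) precise. The symbol difference \(m(\perft k+\epsilon\sigma,\xi)-m(\perft k,\xi)\) tends to zero pointwise by continuity, but must be controlled uniformly against the Schwartz envelope \(\hat g(\sigma)\widehat{u_k}(\xi)\), and, crucially, this small-\(\sigma\) decay must be transferred into sufficient temporal decay of \(R_\epsilon\) on the scale \(\sim\epsilon^{-1}\) matching that of \(g_\epsilon\). Integration by parts in \(\sigma\) against the Schwartz function \(\hat g\) is the standard device for carrying this out, combined with the observation that only finitely many frequencies \(k\) appear so that the continuity of \(m\) can be used uniformly on the relevant compact range.
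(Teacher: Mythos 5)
Your overall strategy---reduce to trigonometric polynomials with Schwartz coefficients, damp by $g(\epsilon t)$, apply $T_m$, and pass to the limit via the scaling identity---is precisely the de~Leeuw transference argument that the paper's proof consists of citing (de~Leeuw 1965 and Proposition 5.7.1 of Hyt\"onen--van Neerven--Veraar--Weis), and your reduction, your formula for $T_mU_\epsilon$, and your scaling identity are all correct. The gap is exactly the step you flag as the main obstacle, and the device you propose to close it does not work. Integrating by parts in $\sigma$ to convert the smallness of $m(\perft k+\epsilon\sigma,\xi)-m(\perft k,\xi)$ into temporal decay of $R_\epsilon$ on the scale $\epsilon^{-1}$ necessarily places derivatives on $m$, but $m$ is only assumed continuous and bounded, so no decay of $R_\epsilon$ in $t$ can be extracted this way; and without decay, a bound on $R_\epsilon(t,\cdot)$ that is merely small uniformly in $t$ gives no control whatsoever of $\norm{R_\epsilon}_{\LR{p}(\R;\LR{q}(\Rn))}$, since $\R$ has infinite measure. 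A second problem hides in the same step: for fixed $t$, dominated convergence applied to $[m(\perft k+\epsilon\sigma,\xi)-m(\perft k,\xi)]\widehat{u_k}(\xi)\hat g(\sigma)$ only yields convergence of $R_\epsilon(t,\cdot)$ to $0$ in $\LR{\infty}(\Rn)\cap\LR{2}(\Rn)$ (via the $\LR{1}\to\LR{\infty}$ and Plancherel bounds for the spatial Fourier inversion); for $q<2$ one cannot conclude convergence in $\LR{q}(\Rn)$ from boundedness and pointwise continuity of $m$ alone, because the relevant inverse Fourier transforms need not decay in $x$.

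The standard repair---and the route taken in the cited references---is to run the identical computation in duality rather than in norm. Pair $T_mU_\epsilon$ with $V_\epsilon\coloneqq v\,g_\epsilon$, where $v$ is a second trigonometric polynomial with coefficients in $\SR(\Rn)$ (such $v$ are norming for $\LR{p}(\torus;\LR{q}(\Rn))$ via the $\LR{p'}(\torus;\LR{q'}(\Rn))$ pairing), and compute $\epsilon\int_\R\int_{\Rn}(T_mU_\epsilon)\,\overline{V_\epsilon}\,\dx\dt$ by Parseval: after the substitution $\tau=\perft k+\epsilon\sigma$ the off-diagonal frequency interactions vanish as $\epsilon\to0$ because $\hat g(\sigma)\overline{\hat g(\sigma+\perft(k-l)/\epsilon)}$ tends to $0$ in $\LR{1}(\R)$, while the diagonal terms converge by dominated convergence to a constant multiple of $\int_\torus\int_{\Rn}(T_Mu)\,\overline{v}\,\dx\dt$. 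The limit is now a scalar integral, so only pointwise continuity and boundedness of $m$ are needed, and no remainder is ever measured in $\LR{p}(\R;\LR{q}(\Rn))$. Combining this with H\"older's inequality, the multiplier hypothesis on $m$, and your scaling identity applied to both $\norm{U_\epsilon}_{\LR{p}(\R;\LR{q}(\Rn))}$ and $\norm{V_\epsilon}_{\LR{p'}(\R;\LR{q'}(\Rn))}$ (with the envelopes chosen so that the resulting constants cancel exactly) gives $\snorm{\int_\torus\int_{\Rn}(T_Mu)\overline{v}\,\dx\dt}\leq\norm{m}_{\calm_{p,q}(\R\times\Rn)}\norm{u}_{\LR{p}(\torus;\LR{q}(\Rn))}\norm{v}_{\LR{p'}(\torus;\LR{q'}(\Rn))}$, and taking the supremum over $v$ completes the proof with the sharp constant.
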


\begin{proof}
The statement can be shown as in~\cite{Leeuw1965}, 
where a transference principle 
from scalar-valued $\LR{p}(\R)$-multipliers
to $\LR{p}(\torus)$-multipliers was shown.
For a more direct and modern approach, one may also follow the proof
of \cite[Proposition 5.7.1]{HytonenNeervenVeraarWeis_AnaBanachSpaces1_2016},
where an operator-valued version of
the result from~\cite{Leeuw1965} was established.
\end{proof}

We now apply this transference principle
to show the following result, which is 
an extension of \cite[Theorem 4.1]{GaldiKyed_TPflowPastBody_2018}
to the case of mixed norms
Moreover, we also take fractional time derivatives into account.

\begin{Theorem}\label{thm:embedding}
Let $\Omega\subset\Rn$, $n\geq 2$, be a bounded or exterior domain with Lipschitz boundary,
and let $q,r\in(1,\infty)$.
For $\alpha\in[0,2]$ let
\[
r_0\in
\begin{cases}
\bb{1,\frac{2r}{2-\alpha r}} 
&\tif \alpha r <2, 
\\
[1,\infty) 
&\tif \alpha r =2,
\\
[1,\infty] 
&\tif \alpha r >2,
\end{cases}
\qquad\qquad
q_0
\in
\begin{cases}
\bb{q,\frac{nq}{n-(2-\alpha)q}} 
&\tif (2-\alpha) q <n, 
\\
[q,\infty) 
&\tif (2-\alpha) q =n,
\\
[q,\infty] 
&\tif (2-\alpha) q >n,
\end{cases}
\]
and for $\beta\in[0,1]$ let
\[
r_1\in
\begin{cases}
\bb{1,\frac{2r}{2-\beta r}} 
&\tif \beta r <2, 
\\
[1,\infty) 
&\tif \beta r =2,
\\
[1,\infty] 
&\tif \beta r >2,
\end{cases}
\qquad\qquad
q_1
\in
\begin{cases}
\bb{q,\frac{nq}{n-(1-\beta)q}} 
&\tif (1-\beta) q <n, 
\\
[q,\infty) 
&\tif (1-\beta) q =n,
\\
[q,\infty] 
&\tif (1-\beta) q >n.
\end{cases}
\]
Then there $C=C(n,q,r,\alpha,\beta)>0$
such that all 
$\uvel\in\WSR{1}{r}(\torus;\LR{q}(\Omega))\cap\LR{r}(\torus;\WSR{2}{q}(\Omega))$ 
satisfy the inequality
\begin{equation}\label{est:embedding}
\begin{aligned}
\norm{\uvel}_{\LR{r_0}(\torus;\LR{q_0}(\Omega))}
+\norm{\grad\uvel}_{\LR{r_1}(\torus;\LR{q_1}(\Omega))}
&+\norm{\sqrt{D}_t\uvel}_{\LR{r_1}(\torus;\LR{q_1}(\Omega))}
+\norm{\sqrt{D}_t\grad\uvel}_{\LR{r}(\torus;\LR{q}(\Omega))}
\\
&\qquad
\leq C\bp{
\norm{\uvel}_{\WSR{1}{r}(\torus;\LR{q}(\Omega))}
+\norm{\uvel}_{\LR{r}(\torus;\WSR{2}{q}(\Omega))}}.
\end{aligned}
\end{equation}
\end{Theorem}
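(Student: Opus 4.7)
The strategy is to reduce the estimate to the whole-space case $\Omega=\Rn$ by means of a bounded extension operator $E$ that is simultaneously continuous $\LR{q}(\Omega)\to\LR{q}(\Rn)$ and $\WSR{2}{q}(\Omega)\to\WSR{2}{q}(\Rn)$; such an operator exists for bounded and exterior Lipschitz domains. Applied pointwise in time, this produces an extension $\tilde\uvel=E\uvel\in\WSR{1}{r}(\torus;\LR{q}(\Rn))\cap\LR{r}(\torus;\WSR{2}{q}(\Rn))$ with norm controlled by that of $\uvel$, so it suffices to prove \eqref{est:embedding} on $\torus\times\Rn$. I would then decompose $\uvel=\proj\uvel+\projcompl\uvel$: the steady part $\proj\uvel$ depends only on $x$, is annihilated by $\sqrt{D}_t$, and its contribution to the Lebesgue norms on the left-hand side is controlled by the classical spatial Sobolev embedding of $\WSR{2}{q}(\Rn)$. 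The nontrivial content of the theorem therefore concerns $\projcompl\uvel$, whose Fourier support avoids the hyperplane $\{k=0\}$.

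For the estimates involving $\sqrt{D}_t$, identify the relevant operators as Fourier multipliers on $\torus\times\Rn$ with symbols $\snorm{\perft k}^{1/2}\snorm{\xi}$ and $\snorm{\perft k}^{1/2}$. Relative to the parabolic symbol $1+\snorm{\perft k}+\snorm{\xi}^2$ which governs the right-hand side, the resulting quotients are bounded and parabolically homogeneous of degree $0$, smooth away from the origin. Replacing $\perft k$ by $\eta\in\R$ produces continuous extensions on $\R\times\Rn$ to which a parabolic Mikhlin-type multiplier theorem in the mixed-norm space $\LR{r}(\R;\LR{q}(\Rn))$ applies, and Proposition~\ref{prop:transference} then yields the corresponding bound on $\torus\times\Rn$. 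Running the same multiplier argument for general exponents shows, for each $\alpha\in[0,2]$,
\[
\norml{D_t^{\alpha/2}\grad^{2-\alpha}\uvel}_{\LR{r}(\torus;\LR{q}(\Rn))}
\leq C\bp{\norm{\uvel}_{\WSR{1}{r}(\torus;\LR{q})}+\norm{\uvel}_{\LR{r}(\torus;\WSR{2}{q})}},
\]
and analogously for $\beta\in[0,1]$ with $D_t^{\beta/2}\grad^{1-\beta}\uvel$. The Lebesgue embeddings $\uvel\in\LR{r_0}(\torus;\LR{q_0}(\Rn))$ and $\grad\uvel\in\LR{r_1}(\torus;\LR{q_1}(\Rn))$ then follow by combining a one-dimensional fractional Sobolev embedding in $t$ (raising $\LR{r}(\torus)$ to $\LR{r_0}(\torus)$ at the cost of $\alpha/2$ time derivatives) with the spatial Sobolev embedding $\WSR{2-\alpha}{q}(\Rn)\hookrightarrow\LR{q_0}(\Rn)$ applied at fixed time, and similarly with $\beta$ in place of $\alpha$.

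The main obstacle will be justifying the mixed-norm Mikhlin theorem used in the multiplier step. Scalar Mikhlin on $\R\times\Rn$ is not sufficient for the target space $\LR{r}(\R;\LR{q}(\Rn))$: one must either invoke an operator-valued version, combining an iterated application of the transference principle with a Banach-valued Mikhlin theorem in the spirit of \cite[Proposition 5.7.1]{HytonenNeervenVeraarWeis_AnaBanachSpaces1_2016}, relying on the UMD property of $\LR{q}(\Rn)$ and the $R$-boundedness of the parabolic symbol derivatives, or perform a careful iterated singular-integral argument with special treatment at the parabolic origin. A secondary technicality is the treatment of the borderline and endpoint exponents (for instance $r_0=\infty$ when $\alpha r>2$), which must be recovered directly from the Fourier representation, using the summability of the Fourier coefficients of $\projcompl\uvel$ together with the control on $D_t^{\alpha/2}\uvel$, rather than through fractional Sobolev embedding.
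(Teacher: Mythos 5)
Your proposal is correct and follows essentially the same route as the paper: extension to $\torus\times\Rn$, Fourier multiplier analysis combined with the transference principle of Proposition~\ref{prop:transference}, and fractional integration in time and space to trade derivatives for integrability (the paper realizes the latter via the explicit potential kernels $\gamma_\mu$, $\Gamma_\nu$ and Young's inequality, which also covers the endpoint cases $r_0=\infty$, $q_0=\infty$ you worry about). The "main obstacle" you flag — a mixed-norm multiplier theorem on $\LR{r}(\R;\LR{q}(\Rn))$ — is exactly what Lizorkin's theorem \cite[Corollary 1]{Lizorkin_MultipliersMixedNorms1970} supplies, so no operator-valued Mikhlin machinery is needed; the only additional care required is a smooth cut-off near $\eta=0$ to make the extended symbols for the $\sqrt{D}_t$-terms continuous before transference, as the paper does.
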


\begin{proof}
For the proof
we proceed analogously to \cite[Theorem 4.1]{GaldiKyed_TPflowPastBody_2018}.
However, we have to modify some arguments in the case $p\neq q$, 
and we also derive estimates for the fractional time derivative,
which is why we give some details here.
Using Sobolev extension operators 
and the density properties of $\SR(\torus\times\Rn)$, it suffices to show estimate \eqref{est:embedding}
for $\Omega=\Rn$ and $\uvel\in\SR(\grp)$ with $\grp=\torus\times\Rn$.

We begin with the estimate of $\uvel$.
By means of the Fourier transform,
we obtain
\begin{equation}\label{eq:embeddingthm.decomp}
\begin{split}
\uvel
=\iFT_\grp\Bb{
\frac{1}{1+\snorm{\xi}^2+i \perf k} \FT_G\bb{\uvel+\partial_t\uvel-\Delta\uvel}
} 
=
\bp{\gamma_{\alpha/2}\otimes\Gamma_{2-\alpha}}
\ast
F,
\end{split}
\end{equation}
where 
\[
\begin{aligned}
\gamma_\mu
&\coloneqq
\iFT_{\torus}\bb{
\bp{1-\delta_\Z(k)}
\snorml{\perft k}^{-\mu}
},
&\qquad
\Gamma_\nu
&\coloneqq
\iFT_{\Rn}\bb{
\np{1+\snorm{\xi}^2}^{-\nu/2}
},
\\
F&\coloneqq
\iFT_\grp \bb{M\,\FT_G\nb{\uvel+\partial_t\uvel-\Delta\uvel}},
&\qquad
M(k,\xi)
&\coloneqq
\frac{\np{1+\snorm{\xi}^2}^{1-\alpha/2}
{\snorml{\perf k}^{\alpha/2}}}
{1+\snorm{\xi}^2+i\perf k}.
\end{aligned}
\]
Here $\delta_\Z$ is the delta distribution on $\Z$,
that is, $\delta_\Z\colon\Z\to\set{0,1}$ 
with $\delta_\Z(k)=1$ if and only if
$k=0$.
We can extend $M\colon\Z\times\Rn\to\C$ to a continuous function $m\colon\R\times\Rn\to\C$
in a trivial way such that $M=m|_{\Z\times\Rn}$.
One readily shows that $m$ satisfies the 
Lizorkin multiplier theorem~\cite[Corollary 1]{Lizorkin_MultipliersMixedNorms1970},
the function $m$ is an $\LR{r}(\R;\LR{q}(\Rn))$-multiplier.
Due to the transference principle from Proposition~\ref{prop:transference},
this implies
that $M$ is an $\LR{r}(\torus;\LR{q}(\Rn))$-multiplier,
and we have
\begin{equation}
\label{eq:embeddingthm.mult}
\begin{aligned}
\norm{F}_{\LR{r}(\torus;\LR{q}(\Rn))}
&\leq C\norm{\uvel+\partial_t\uvel-\Delta\uvel}_{\LR{r}(\torus;\LR{q}(\Rn))}
\\
&\leq C\bp{
\norm{\uvel}_{\WSR{1}{r}(\torus;\LR{q}(\Rn))}
+\norm{\uvel}_{\LR{r}(\torus;\WSR{2}{q}(\Rn))}}.
\end{aligned}
\end{equation}
Moreover, 
from~\cite[Example 3.1.19]{Grafakos1}
and~\cite[Proposition 6.1.5]{Grafakos2}
we conclude
\[
\begin{aligned}
\gamma_\mu&\in\LR{\frac{1}{1-\mu},\infty}(\torus),
&\qquad
\forall s\in\big[1,\frac{1}{1-\mu}\big):\ \gamma_\mu&\in\LR{s}(\torus),
\\
\Gamma_\nu&\in\LR{\frac{n}{n-\nu},\infty}(\Rn),
&\qquad
\forall s\in\big[1,\frac{n}{n-\nu}\big): \ \Gamma_\nu&\in\LR{s}(\Rn)
\end{aligned}
\]
for $\mu\in(0,1)$ and $\nu\in(0,n)$.
Young's inequality thus implies that 
$\varphi\mapsto\gamma_{\alpha/2}\ast\varphi$
defines a continuous linear operator $\LR{r}(\torus)\to\LR{r_0}(\torus)$ if $r_0\geq r$,
and
$\psi\mapsto\Gamma_{2-\alpha}\ast\psi$
defines a continuous linear operator $\LR{q}(\torus)\to\LR{q_0}(\torus)$.
Therefore, 
formula~\eqref{eq:embeddingthm.decomp}
yields
\[
\begin{aligned}
\norm{\uvel}_{\LR{r_0}(\torus;\LR{q_0}(\Rn))}
&=\Bp{\int_{\torus}\normL{
\int_\torus
\gamma_\alpha(t-s)
\Gamma_{2-\alpha}
\ast_{\Rn}
F(s,\cdot)\,\ds}_{q_0}^{r_0}\,\dt}^{\frac{1}{r_0}} 
\\
&\leq\Bp{
\int_{\torus} \Bp{
\int_\torus\snorml{\gamma_\alpha(t-s)}\,
\norml{\Gamma_{2-\alpha}
\ast_{\Rn}
F(s,\cdot)}_{q_0}\,\ds}^{r_0}\,\dt}^{\frac{1}{r_0}} 
\\
&\leq C\Bp{
\int_{\torus} 
\norml{\Gamma_{2-\alpha}
\ast_{\Rn}
F(t,\cdot)}_{q_0}^{r}\,\dt}^{\frac{1}{r}} \\
&\leq C\norm{F}_{\LR{r}(\torus;\LR{q}(\Rn))}.
\end{aligned}
\]
Invoking now~\eqref{eq:embeddingthm.mult},
we arrive at the desired estimate for $\uvel$ if $r_0\geq r$. 
Since $\torus$ is compact, the estimate for $r_0<r$ follows immediately.

The remaining estimates of $\grad\uvel$,
$\sqrt{D}_t\uvel$ and $\sqrt{D}_t\grad\uvel$
can be shown in the same way as those for $\uvel$.
Note that for the estimates of $\sqrt{D}_t\uvel$ and $\sqrt{D}_t\grad\uvel$,
the procedure has to be slightly modified
since the trivial extension of the
corresponding multipliers to $\R\times\R^n$ 
is not continuous.
To demonstrate this, we focus 
on the estimate for $\sqrt{D}_t\grad\uvel$,
which
means nothing else than the boundedness
of the linear operator
\[
\sqrt{D}_t\grad\colon
\WSR{1}{r}(\torus;\LR{q}(\Rn))\cap
\LR{r}(\torus;\WSR{2}{q}(\Rn))
\to\LR{r}(\torus;\LR{q}(\Rn)).
\]
Similarly to above, 
this boundedness follows if the function
\[
M\colon\Z\times\R^n\to\C,\qquad
M(k,\xi)=\frac{\snorm{\perf k}^\half \xi_j}{\snorm{\xi}^2+i\perf k}
\]
is an $\LR{r}(\torus;\LR{q}(\Rn))$-multiplier
for $j=1,\dots,n$.
Note that its trivial extension is not a continuous function in $(0,0)\in\R\times\Rn$,
which is necessary for application of the transference principle 
from Proposition~\ref{prop:transference}.
However, since $M(0,\xi)=0$, we can introduce
a smooth cut-off function $\chi\in\CRci(\R)$ 
with $\supp\chi\subset(-1,1)$ and such that
$\chi(\eta)=1$ for $\snorm{\eta}\leq \frac{1}{2}$.
We define
\[
m\colon\R\times\R^n\to\C,\qquad
m(\eta,\xi)=\frac{\bp{1-\chi(\eta)}\snorm{\perf\eta}^\half \xi_j}{\snorm{\xi}^2+i\perf\eta}.
\]
Then $m$ is a smooth function with $m|_{\Z\times\Rn}=M$,
and one readily verifies that $m$
satisfies the multiplier theorem 
by Lizorkin~\cite[Corollary 1]{Lizorkin_MultipliersMixedNorms1970}.
Finally, Proposition~\ref{prop:transference}
shows that 
$M$ is an $\LR{r}(\torus;\LR{q}(\Rn))$-multiplier,
which implies the estimate for 
$\sqrt{D}_t\grad\uvel$.
\end{proof}

As mentioned in the proof, the lower bound $1$ for $r_0$ and $r_1$ is valid
since the torus $\torus$ has finite measure.
In the same manner, the lower bound for $q_0$ and $q_1$ can be replaced with $1$ 
if $\Omega$ is a bounded domain.

In \cite[Theorem 4.1]{EiterKyed_ViscousFlowAroundRigidBodyPerformingTPMotion_2021}
a homogeneous version of Theorem \ref{thm:embedding} was shown, but 
only in the case $q=r$. 
Modifying the proof 
in~\cite{EiterKyed_ViscousFlowAroundRigidBodyPerformingTPMotion_2021}
and using similar arguments as above, 
this result is easily extended to the case $q\neq r$.

We might also formulate the assumptions 
on the integrability exponents in Theorem \ref{thm:embedding} as follows:
Let $r_0,p_0,r_1,p_1\in[1,\infty]$ such that
\[
\begin{aligned}
\frac{2}{r}-\alpha &< \frac{2}{r_0} \leq 2,
&\qquad\qquad
\frac{n}{q}-(2-\alpha) &< \frac{n}{q_0} \leq \frac{n}{q},
\\
\frac{2}{r}-\beta &< \frac{2}{r_1} \leq 2,
&\qquad\qquad
\frac{n}{q}-(1-\beta) &< \frac{n}{q_1} \leq \frac{n}{q},
\end{aligned}
\]
where in each of the four conditions the left $<$ can be replaced with $\leq$
if the respective lower bound is different from $0$.

\section{Preliminary regularity results}
\label{sec:prelimreg}

As a preparation for the proof of the main theorems,
we first consider
the steady-state Navier--Stokes equations
\begin{subequations}\label{sys:statNS}
\begin{alignat}{2}
-\Delta\vvel-\rey\partial_1\vvel+\vvel\cdot\grad\vvel+\grad\vpres
&=F
&&\tin\Omega, \\
\Div\vvel&=0 
&&\tin\Omega, \\
\vvel&=\vvel_\ast 
&&\ton\partial\Omega
\end{alignat}
\end{subequations}
and recall 
the following result on the regularity of weak solutions.

\begin{Theorem}\label{thm:statNS}
Let $\Omega\subset\R^3$ be an exterior domain with $\CR{2}$-boundary.
Let $q_0\in(1,2)$ such that
\begin{equation}\label{el:statdata}
F\in\LR{q}(\Omega)^3, \qquad
\vvel_\ast\in\WSR{2-\frac{1}{q}}{q}(\partial\Omega)^3
\end{equation}
for $q=q_0$ and for $q=\frac{3}{2}$.
If $\vvel$ is a weak solution to \eqref{sys:NavierStokesTP_sspart},
then there exists an associated pressure field $\vpres$ such that
\[
\vvel\in\DSR{2}{q_0}(\Omega)^3\cap
\DSR{1}{4q_0/(4-q_0)}(\Omega)^3\cap
\LR{2q_0/(2-q_0)}(\Omega)^3,
\qquad \vpres\in\DSR{1}{q_0}(\Omega),
\]
and \eqref{sys:statNS} is satisfied in the strong sense.
Additionally, 
if there exists $q_1\in(3,\infty)$ such that \eqref{el:statdata} holds
for all 
$q\in(1,q_1]$,
then $\vvel$ satisfies
\eqref{el:reg.ss1} and \eqref{el:reg.ss0}.
Moreover, if $\Omega$ has $\CR{3}$-boundary and $F\in\WSR{1}{q_1}(\Omega)^3$ and 
$\vvel_\ast\in\WSR{3-1/q_1}{q_1}(\partial\Omega)^3$,
then \eqref{el:reg.ss2} holds and
$\vpres\in\DSR{1}{q}(\Omega)$ for all $q\in(1,\infty]$.
\end{Theorem}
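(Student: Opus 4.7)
\emph{Base step: linearization.} The plan is to linearize~\eqref{sys:statNS} by absorbing the convective term $\vvel\cdot\grad\vvel$ into the right-hand side and then to invoke the known $\LR{q}$-theory for the exterior Oseen problem, bootstrapped via the anisotropic Sobolev embedding specific to Oseen flows in three dimensions. A weak solution satisfies $\grad\vvel\in\LR{2}(\Omega)$ and, by the standard Sobolev inequality, $\vvel\in\LR{6}(\Omega)$, so H\"older's inequality yields $\vvel\cdot\grad\vvel\in\LR{3/2}(\Omega)$. Combined with the hypothesis that $F\in\LR{3/2}(\Omega)$ and $\vvel_\ast\in\WSR{4/3}{3/2}(\partial\Omega)$, the linear Oseen theory in exterior domains with $\CR{2}$-boundary (see Galdi's monograph) furnishes a pressure $\vpres\in\DSR{1}{3/2}(\Omega)$ with $\vvel\in\DSR{2}{3/2}(\Omega)^3$ solving~\eqref{sys:statNS} in the strong sense.

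\emph{Iteration via the anisotropic Oseen embedding.} The next step is to improve the integrability of $\vvel$ and $\grad\vvel$ through the anisotropic embedding for Oseen solutions in exterior domains (reflecting the slower decay in the wake direction), which, for $q\in(1,2)$, bounds $\norm{\vvel}_{\LR{2q/(2-q)}(\Omega)}$ and $\norm{\grad\vvel}_{\LR{4q/(4-q)}(\Omega)}$ by $\norm{\grad^2\vvel}_{\LR{q}(\Omega)}$ together with lower-order data terms. Applied at $q=3/2$ this recovers $\vvel\in\LR{6}(\Omega)$ and upgrades $\grad\vvel$ to $\LR{12/5}(\Omega)$, so that the convective term now lies in $\LR{12/7}(\Omega)$. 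Iterating the Oseen estimate with a sequence of exponents, each step gains additional integrability for $\vvel\cdot\grad\vvel$, and after finitely many iterations one can match any preassigned $q_0\in(1,2)$; invoking the hypothesis on $F$ and $\vvel_\ast$ at level $q_0$ in the final step closes the loop and yields the integrability assertions for general $q_0\in(1,2)$.

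\emph{Higher regularity under stronger data assumptions.} Under the stronger hypothesis that~\eqref{el:statdata} holds for every $q\in(1,q_1]$ with $q_1>3$, one continues the bootstrap past $q=3/2$ up to $q=q_1$, producing $\vvel\in\DSR{2}{q_1}(\Omega)^3$; the Sobolev embedding $\WSR{2}{q_1}\hookrightarrow\CR{1}$ (valid since $q_1>3$) applied on bounded neighborhoods of $\partial\Omega$, combined with the exterior-domain Oseen estimates for integrability at infinity, yields~\eqref{el:reg.ss1} and~\eqref{el:reg.ss0}. Finally, for $\partial\Omega$ of class $\CR{3}$ with $\proj f\in\WSR{1}{q_1}(\Omega)^3$ and $\proj\uvel_\ast\in\WSR{3-1/q_1}{q_1}(\partial\Omega)^3$, one differentiates the already strongly-solved Oseen system and invokes the corresponding third-order linear theory to obtain $\grad^3\vvel\in\LR{q_1}(\Omega)$ and $\grad^2\vpres\in\LR{q_1}(\Omega)$; interpolating between this and the $\DSR{2}{q}$-bounds for arbitrarily small $q$ established in the previous step produces~\eqref{el:reg:ss2.full} for every $s_2\in(1,\infty)$.

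\emph{Main obstacle.} The essential non-standard ingredient is the anisotropic Sobolev embedding with exponents $2q/(2-q)$ and $4q/(4-q)$: these do not come from the isotropic Sobolev embedding but reflect the explicit structure of the Oseen fundamental solution and the distinct decay rates parallel and transversal to $\eone$. Once this estimate is in hand, the remaining work consists of a careful bookkeeping of exponents so that the bootstrap closes uniformly in $q_0\in(1,2)$ and so that the convective term is controlled in $\LR{q_0}$ at the last iteration.
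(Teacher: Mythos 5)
The paper does not actually prove this theorem: its ``proof'' is a one-line citation to \cite[Lemma X.6.1 and Theorem X.6.4]{GaldiBookNew}. So there is no in-paper argument to compare against; what you have written is a reconstruction of the strategy behind the cited results, and at the level of strategy it is the right one. You correctly identify the two essential ingredients: treating $\vvel\cdot\grad\vvel$ as part of the forcing for the exterior Oseen problem, and the anisotropic Oseen summability exponents $2q/(2-q)$ and $4q/(4-q)$ (which indeed come from the wake structure of the Oseen fundamental solution, not from the isotropic Sobolev embedding). The base step ($\vvel\in\LR{6}$, $\grad\vvel\in\LR{2}$, hence $\vvel\cdot\grad\vvel\in\LR{3/2}$, hence $\vvel\in\DSR{2}{3/2}\cap\DSR{1}{12/5}\cap\LR{6}$) is correct and is how the argument in Galdi's monograph begins.

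The genuine gap is in the sentence ``after finitely many iterations one can match any preassigned $q_0\in(1,2)$.'' The iteration you describe only moves exponents \emph{upward}: starting from $\vvel\in\LR{s}$ with $s\geq 6$ and $\grad\vvel\in\LR{p}$ with $p\geq 2$, H\"older can never place $\vvel\cdot\grad\vvel$ in $\LR{q}$ for any $q<3/2$, and each further application of the embedding at level $q\geq 3/2$ only produces \emph{larger} values of $s$ and $p$. Since $\Omega$ has infinite measure, $\LR{3/2}\not\subset\LR{q_0}$ for $q_0<3/2$, so your scheme cannot produce $\grad^2\vvel\in\LR{q_0}$ for $q_0$ close to $1$, nor the low ends of the ranges \eqref{el:reg.ss0} and \eqref{el:reg.ss1} ($\vvel\in\LR{s_0}$ for $s_0$ near $2$, $\grad\vvel\in\LR{s_1}$ for $s_1$ near $4/3$), which is precisely the nontrivial content of the theorem. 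To descend below $q=3/2$ one needs additional input that Hölder plus the stated embedding does not supply --- in Galdi's treatment this comes from the representation of $\vvel$ in terms of the Oseen fundamental solution and its anisotropic pointwise/summability properties (equivalently, a splitting of $\vvel$ into a linear Oseen part driven by $F$ and a remainder whose decay is estimated directly), and it is where most of the work in \cite[Theorem X.6.4]{GaldiBookNew} lies. Your sketch of the final statement (\eqref{el:reg:ss2.full} via third-order linear theory and interpolation) is fine, but it too relies on the full ranges \eqref{el:reg.ss0}--\eqref{el:reg.ss1} whose derivation is the missing step.
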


\begin{proof}
See \cite[Lemma X.6.1 and Theorem X.6.4]{GaldiBookNew}.
\end{proof}

We further derive a similar regularity result for
weak solutions to the time-periodic Oseen problem, 
which is the linearization 
of~\eqref{sys:NavierStokesTP}
given by
\begin{subequations}\label{sys:Oseen}
\begin{alignat}{2}
\partial_t\uvel-\Delta\uvel - \rey\partial_1\uvel +\grad\upres &= f 
&&\qquad \tin\torus\times\Omega, 
\\
\Div\uvel &=0 
&&\qquad \tin\torus\times\Omega,
\\
\uvel & =\uvel_\ast
&&\qquad\ton\torus\times\partial\Omega.
\end{alignat}
\end{subequations}
Here we focus on the case of purely oscillatory data.
To shorten the notation, we denote the mixed-norm parabolic space by
\[
\calw_{q,r}\coloneqq
\WSR{1}{r}(\torus;\LR{q}(\Omega)^3)\cap\LR{r}(\torus;\WSR{2}{q}(\Omega)^3).
\]

\begin{Lemma}\label{lem:OseenReg}
Let $\Omega\subset\R^3$ be an exterior domain of class $\CR{2}$,
let $\uvel_\ast$ be as 
in~\eqref{cond:BoundaryData},
and let $f\in\LR{r}(\torus;\LR{q}(\Omega)^3)$ for some $r,q\in(1,\infty)$
such that $\proj f=0$ and $\proj \uvel_\ast=0$.
Let
$\uvel\in\LR{\infty}(\torus;\LR{2}(\Omega)^3)$ with
$\grad\uvel\in\LR{2}(\torus\times\Omega)^{3\times3}$
and $\proj\uvel=0$
be a weak 
solution to \eqref{sys:Oseen},
that is, $\uvel=\uvel_\ast$ on $\torus\times\partial\Omega$,
$\Div\uvel=0$ and 
\begin{equation}\label{eq:NSlintp_weak}
\int_{\torus}\int_\Omega\bb{-\uvel\cdot\partial_t\varphi
+\grad\uvel:\grad\varphi
-\rey\partial_1\uvel\cdot\varphi
}\,\dx\dt
=\int_{\torus}\int_\Omega f\cdot\varphi\,\dx\dt
\end{equation}
for all $\varphi\in\CRcisigma(\torus\times\Omega)$.
Then 
$\uvel\in\calw_{q,r}$,
and there exists $\upres\in\LR{q}(\torus;\DSR{1}{q}(\Omega))$
such that $\np{\uvel,\upres}$ is a strong solution to~\eqref{sys:Oseen}.
\end{Lemma}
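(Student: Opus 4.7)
The plan is to subtract a divergence-free extension of the boundary data from $\uvel$, decompose the resulting purely oscillatory problem in Fourier series in $t$, upgrade each Fourier mode via classical Oseen resolvent estimates, and sum up using the multiplier framework of Section \ref{sec:transference+embedding} to obtain the $\calw_{q,r}$-regularity.

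First, using the regularity of $\uvel_\ast$ together with $\proj\uvel_\ast=0$, I would construct a solenoidal extension $\Vvel\in\calw_{q,r}$ of $\uvel_\ast$ with $\Vvel(t,\cdot)$ supported in a fixed bounded neighbourhood of $\partial\Omega$ for every $t$ and with $\proj\Vvel=0$. Such a lift is built by extending $\uvel_\ast$ into a tubular neighbourhood of $\partial\Omega$, multiplying by a spatial cutoff, and correcting the divergence via a Bogovskii operator. Then $\twvel\coloneqq\uvel-\Vvel$ is a weak solution of \eqref{sys:Oseen} with homogeneous boundary data, $\proj\twvel=0$, and right-hand side
\[
\tilde f\coloneqq f+\Delta\Vvel-\partial_t\Vvel+\rey\partial_1\Vvel\in\LR{r}(\torus;\LR{q}(\Omega)^3),\qquad\proj\tilde f=0,
\]
and it still satisfies $\twvel\in\LR{\infty}(\torus;\LR{2}(\Omega)^3)$ together with $\grad\twvel\in\LR{2}(\torus\times\Omega)^{3\times3}$.

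Next I would expand $\twvel$ and $\tilde f$ in Fourier series in $t$, so that the Oseen equation reduces mode by mode to the stationary resolvent problem
\[
\bp{i\perf k-\Delta-\rey\partial_1}\twvel_k+\grad\wpres_k=\tilde f_k,\qquad\Div\twvel_k=0,\qquad\twvel_k|_{\partial\Omega}=0,
\]
for each $k\in\Z\setminus\set{0}$. For such $k$ the spectral parameter $i\perf k$ lies outside the spectrum of the Oseen operator on $\LR{q}(\Omega)$, so the classical $\LR{q}$-resolvent regularity applied to the weak $\LR{2}$-mode $\twvel_k$ upgrades it to a strong $\WSR{2}{q}$-solution together with a pressure $\wpres_k\in\DSR{1}{q}(\Omega)$. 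The weak mode indeed coincides with this strong solution by the uniqueness of the Oseen resolvent problem in the class $\setc{u}{u|_{\partial\Omega}=0,\ \grad u\in\LR{2}(\Omega)^3}$, which follows from a standard energy identity using the antisymmetry of $\rey\partial_1$ in $\LR{2}(\Omega)$.

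To sum the mode-wise resolvent estimates in the mixed-norm class $\calw_{q,r}$, I would apply Lizorkin's multiplier theorem to the symbols $\perf k/\sigma(k,\xi)$ and $\xi_j\xi_\ell/\sigma(k,\xi)$, with $\sigma(k,\xi)\coloneqq\snorm{\xi}^2+i\bp{\perf k-\rey\xi_1}$, after extending them continuously from $\np{\Z\setminus\set{0}}\times\R^3$ to $\R\times\R^3$ by means of a cutoff near $k=0$, and then invoke Proposition \ref{prop:transference}. In the whole space this directly yields the desired $\calw_{q,r}$-estimate; in the exterior-domain case it is combined with interior maximal regularity of the time-periodic Stokes problem on a bounded subdomain via a spatial cutoff and Bogovskii correction, producing both $\twvel\in\calw_{q,r}$ and a pressure $\wpres\in\LR{r}(\torus;\DSR{1}{q}(\Omega))$. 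Setting $\uvel=\twvel+\Vvel$ and taking $\upres=\wpres$ then delivers the strong solution. The principal technical obstacle is this exterior-domain localization: while the whole-space estimate is a transparent consequence of the multiplier framework of Section \ref{sec:transference+embedding}, combining it with bounded-domain maximal regularity while preserving divergence-freeness and extracting the pressure requires a careful Bogovskii-type correction, which is standard but somewhat involved.
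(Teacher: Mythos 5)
Your strategy is genuinely different from the paper's. The paper deliberately avoids proving mixed-norm maximal regularity for the time-periodic Oseen problem directly: it treats the drift term $\rey\partial_1$ as a perturbation, solving the time-periodic \emph{Stokes} problem twice with the mixed-norm result of Eiter--Kyed--Shibata, using the embedding Theorem~\ref{thm:embedding} to upgrade the temporal integrability of $\partial_1\Uvel$ at each step, and only then invoking the Oseen maximal regularity in the non-mixed case $\LR{q}(\torus\times\Omega)$; the sum $\Uvel+\Vvel+\Wvel$ is then identified with $\uvel$ by uniqueness. The remark following the lemma states explicitly that the argument "could be shortened severely" if mixed-norm maximal regularity for the Oseen problem were available --- which is exactly what your proposal sets out to prove from scratch. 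If carried out, your route would yield a stronger intermediate result; but as written it has a genuine gap at its central step.

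The gap is the summation of the mode-wise exterior-domain resolvent estimates in $\LR{r}(\torus;\LR{q}(\Omega))$ for $r\neq 2$. The Lizorkin--transference machinery of Section~\ref{sec:transference+embedding} applies only to scalar Fourier multipliers on $\torus\times\Rn$, i.e.\ to translation-invariant operators; it gives you the whole-space estimate (modulo verifying the Lizorkin condition for the cut-off Oseen symbol, which is delicate near the degenerate set $\perf\eta=\rey\xi_1$ but does work out once the $k=0$ mode is removed). On the exterior domain, however, the solution operators $\tilde f_k\mapsto\twvel_k$ are \emph{not} Fourier multipliers in $x$, and uniform boundedness of this family over $k\in\Z\setminus\set{0}$ does not imply convergence of the Fourier series in $\LR{r}(\torus;\LR{q}(\Omega))$ unless $r=2$; one needs $\mathscr{R}$-boundedness of the family together with an operator-valued periodic multiplier theorem. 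This is precisely the content of the black-box theorems the paper cites, not a routine corollary of the resolvent estimates. Your fallback --- localization via spatial cut-off and Bogovskii correction, gluing the whole-space multiplier estimate to bounded-domain time-periodic maximal regularity --- is the correct idea, but it is where all the work lies: the cut-off produces error terms (including nonlocal pressure contributions and the drift term $\rey(\partial_1\chi)\twvel$ supported in an annulus) whose absorption requires closing an a priori estimate and a uniqueness argument, and at that point you are essentially reproving the Eiter--Kyed--Shibata theorem for the Oseen operator rather than deducing the lemma from available results. Either complete that localization in detail, or adopt the paper's perturbative detour through the Stokes problem, for which the mixed-norm result is already on record.
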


\begin{proof}
For $q=r$, the result was shown
in~\cite[Lemma 5.1]{EiterGaldi_SpatialDecayVorticityTP_2021}.
Arguing in the same way, 
we can show that it suffices to treat the case $\uvel_\ast=0$.
In this case, first consider 
a solution of the time-periodic Stokes problem,
that is, the system
\begin{subequations}
\label{sys:Stokes}
\begin{alignat}{2}
\partial_t\Uvel-\Delta\Uvel +\grad\Upres &= f 
&&\qquad \tin\torus\times\Omega, 
\\
\Div\Uvel &=0 
&&\qquad \tin\torus\times\Omega,
\\
\Uvel & =0
&&\qquad\ton\torus\times\partial\Omega.
\end{alignat}
\end{subequations}
We now use the result from~\cite[Theorem~5.5]{EiterKyedShibata_PeriodicLpEstRboundedness}
on maximal regularity for this system
for right-hand sides in 
$\LR{r}(\torus;\LR{q}(\Omega)^3)$.
From this,
we conclude the existence of a unique solution
$(\Uvel,\Upres)$
with $\proj\Uvel=0$ and 
$\Uvel\in\calw_{q,r}$.
The embedding theorem~\ref{thm:embedding}
implies
that
$\partial_1\Uvel\in\LR{\tilde r}(\torus;\LR{q}(\Omega))$
for $\tilde r\in(1,\infty)$ with $\frac{1}{\tilde r}\in(\frac{1}{r}-\frac{1}{2},1]$.
We again employ the the maximal regularity result 
from~\cite[Theorem~5.5]{EiterKyedShibata_PeriodicLpEstRboundedness}
to obtain the existence of 
a unique solution  $\np{\Vvel,\Vpres}$ to
\[
\begin{aligned}
\partial_t\Vvel-\Delta\Vvel +\grad\Vpres &= \tau\partial_1\Uvel
&&\tin\torus\times\Omega, 
\\
\Div\Vvel &=0 
&&\tin\torus\times\Omega,
\\
\Vvel & =0
&&\ton\torus\times\partial\Omega,
\end{aligned}
\]
such that $\proj\Vvel=0$ and 
$\Vvel\in\calw_{q,\tilde r}$
for all $\tilde r$ as above.
Employing Theorem~\ref{thm:embedding} once more,
we see that $\partial_1\Vvel\in\LR{\hat r}(\torus;\LR{q}(\Omega))$
for any $\hat{r}\in(1,\infty)$.
In particular, we can choose $\hat r=q$,
that is, we have
$\partial_1\Vvel\in\LR{q}(\torus\times\Omega)$.
Now we can use the maximal regularity result
\cite[Theorem 5.1]{GaldiKyed_TPflowViscLiquidpBody_2018}
for the Oseen system for right-hand sides in 
$\LR{q}(\torus\times\Omega)$
to find a solution $(\Wvel,\Wpres)$ 
to
\[
\begin{aligned}
\partial_t\Wvel-\Delta\Wvel - \rey\partial_1\Wvel +\grad\Wpres &= \tau\partial_1\Vvel
&&\tin\torus\times\Omega, 
\\
\Div\Wvel &=0 
&&\tin\torus\times\Omega,
\\
\Wvel & =0
&&\ton\torus\times\partial\Omega,
\end{aligned}
\]
such that $\Wvel\in\calw_{q,q}$.
Theorem~\ref{thm:embedding} further implies
$\partial_1\Wvel\in\LR{\overline r}(\torus;\LR{q}(\Omega))$
for $\frac{1}{\overline r}\in(\frac{1}{q}-\frac{1}{2},1]$
such that
$\Wvel\in\calw_{\overline r,q}$
by~\cite[Theorem~5.5]{EiterKyedShibata_PeriodicLpEstRboundedness}.
Repeating this argument once again,
we obtain $\Wvel\in\calw_{q,\overline r}$
for $\overline r\in(1,\infty)$.
In total, we see that 
$\tuvel\coloneqq\Uvel+\Vvel+\Wvel$
and $\tupres\coloneqq\Upres+\Vpres+\Wpres$
satisfy the Oseen system \eqref{sys:Oseen}
and $\tuvel\in\calw_{q,r}$. 
To conclude that $\uvel=\tuvel$
one can now proceed as in the proof of
in~\cite[Lemma 5.1]{EiterGaldi_SpatialDecayVorticityTP_2021}.
The regularity of the pressure $\upres$ follows immediately.
\end{proof}

Observe that for the proof we combined two results on maximal regularity: 
one for the Stokes problem \eqref{sys:Stokes}
for right-hand sides in $\LR{r}(\torus;\LR{q}(\Omega)^3)$,
and one for the Oseen problem~\eqref{sys:Oseen}
for right-hand sides in $\LR{q}(\torus\times\Omega)^3$.
The argument could be shortened severely
if such a result would be available 
for the Oseen problem~\eqref{sys:Oseen}
for right-hand sides in $\LR{r}(\torus;\LR{q}(\Omega)^3)$.
For a proof, one can use the approach
developed in~\cite[Theorem~5.5]{EiterKyedShibata_PeriodicLpEstRboundedness},
which would also give corresponding \textit{a priori} estimates.

\section{Regularity of time-periodic weak solutions}
\label{sec:regularity}

Now we begin with the proof of
Theorem~\ref{thm:regularity}, 
for which we proceed by a bootstrap argument
that increases the range of admissible integrability exponents step by step.
To shorten the notation,
we introduce the \textit{Serrin number}
\[
s_{q,r}\coloneqq\frac{2}{r}+\frac{3}{q}.
\]
For the whole section, let 
$f$ and $\uvel_\ast$ satisfy~\eqref{cond:data},
and let $\uvel$ be a 
weak solution in the sense of Definition~\ref{def:WeakSolution_NStp}.
We decompose $\uvel$ and set $\vvel\coloneqq\proj\uvel$ and $\wvel\coloneqq\projcompl\uvel$.

We first show 
that the definition of weak solutions already implies some degree 
of increased regularity
and that there exists a pressure such that
the Navier--Stokes equations are satisfied in the strong sense.

\begin{Lemma}\label{lem:reg.initial}
There exists a pressure field
$\upres=\vpres+\wpres$ such that
\begin{align}
\label{el:regularity_ss1}
&\forall s_2\in(1,\frac{3}{2}]:\, \vvel\in\DSR{2}{s_2}(\Omega)^3,\ \ 
\vpres\in\DSR{1}{s_2}(\Omega)^3,
\\
\label{el:regularity_ss2}
&\forall s_1\in(\frac{4}{3},3]:\, \vvel\in\DSR{1}{s_1}(\Omega)^3, \qquad
\forall s_0\in(2,\infty):\, \vvel\in\LR{s_0}(\Omega)^3,
\\
\label{el:regularity_pp1}
&\forall r,q\in(1,\infty) \text{ with }
s_{q,r}=4:
\quad
\wvel\in\calw_{q,r}, \quad 
\wpres\in\LR{r}(\torus;\DSR{1}{q}(\Omega)),
\end{align}
and the Navier--Stokes equations \eqref{sys:NavierStokesTP} are satisfied in the strong sense.
More precisely, it holds
\begin{subequations}\label{sys:NavierStokesTP_sspart}
\begin{alignat}{2}
-\Delta\vvel-\rey\partial_1\vvel+\vvel\cdot\grad\vvel+\grad\vpres
&=\proj f-\proj\nb{\wvel\cdot\grad\wvel}
&&\qquad\tin\Omega, \\
\Div\vvel&=0 
&&\qquad\tin\Omega, \\
\vvel&=\proj\uvel_\ast 
&&\qquad\ton\partial\Omega.
\end{alignat}
\end{subequations}
and
\begin{subequations}\label{sys:NavierStokesTP_pppart}
\begin{alignat}{2}
\partial_t\wvel-\Delta\wvel - \rey\partial_1\wvel +\grad\wpres 
&= \projcompl f -\vvel\cdot\grad\wvel-\wvel\cdot\grad\vvel
-\projcompl\np{\wvel\cdot\grad\wvel}
&& \tin\torus\times\Omega, 
\\
\Div\wvel &=0 && \tin\torus\times\Omega,
\\
\wvel & =\projcompl\uvel_\ast
&&\ton\torus\times\partial\Omega.
\end{alignat}
\end{subequations}
\end{Lemma}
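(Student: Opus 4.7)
The plan is to apply the projections $\proj$ and $\projcompl$ to the weak formulation of~\eqref{sys:NavierStokesTP} in Definition~\ref{def:WeakSolution_NStp} and thereby decouple the problem into the steady-state system~\eqref{sys:NavierStokesTP_sspart} for $\vvel=\proj\uvel$ and the purely periodic system~\eqref{sys:NavierStokesTP_pppart} for $\wvel=\projcompl\uvel$, and then to bootstrap the regularity of each component using the linear theories provided by Theorem~\ref{thm:statNS} and Lemma~\ref{lem:OseenReg}. The decoupling of the convective terms on the right-hand sides relies on the fact that $\vvel$ is time-independent and $\proj\wvel=0$: these imply $\proj\nb{\vvel\cdot\grad\wvel}=\proj\nb{\wvel\cdot\grad\vvel}=0$ in the steady-state part and $\projcompl\nb{\vvel\cdot\grad\vvel}=0$ in the purely periodic part. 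The pressures $\vpres$ and $\wpres$ are then recovered by a de Rham-type argument built into the cited linear theories.

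To feed the linear results, I would first establish basic integrability of $\wvel$ and of the nonlinearity. By Definition~\ref{def:WeakSolution_NStp} and the Sobolev embedding $\DSR{1}{2}(\Omega)\hookrightarrow\LR{6}(\Omega)$, we have $\wvel\in\LR{\infty}(\torus;\LR{2}(\Omega))\cap\LR{2}(\torus;\LR{6}(\Omega))$, so interpolation places $\wvel$ on the Serrin-$\frac{3}{2}$ line, i.e., in every $\LR{r}(\torus;\LR{a}(\Omega))$ with $s_{a,r}=\frac{3}{2}$. Coupled with $\grad\wvel\in\LR{2}(\torus\times\Omega)$, H\"older's inequality puts $\wvel\cdot\grad\wvel$ on the Serrin-$4$ line, and integrating in time yields in particular $\proj\nb{\wvel\cdot\grad\wvel}\in\LR{q}(\Omega)$ for every $q\in(1,\frac{3}{2}]$. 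Together with the assumption~\eqref{cond:f} on $\proj f$ and $\proj\uvel_\ast\in\CR{2}(\partial\Omega)\subset\WSR{2-1/q}{q}(\partial\Omega)$ from~\eqref{cond:BoundaryData}, Theorem~\ref{thm:statNS} applies for every $q_0\in(1,\frac{3}{2}]$ and yields~\eqref{el:regularity_ss1} together with the portion of~\eqref{el:regularity_ss2} in which $s_1\in(\frac{4}{3},\frac{12}{5}]$ and $s_0\in(2,6]$. The remaining ranges $s_1\in(\frac{12}{5},3]$ and $s_0\in(6,\infty)$ follow from combining the Sobolev embedding $\DSR{2}{3/2}(\Omega)\hookrightarrow\DSR{1}{3}(\Omega)$ with interpolation against the inherited bound $\grad\vvel\in\LR{2}(\Omega)$.

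For the purely periodic part, I would apply Lemma~\ref{lem:OseenReg} to~\eqref{sys:NavierStokesTP_pppart}, whose right-hand side consists of $\projcompl f$ together with the three convective terms $\vvel\cdot\grad\wvel$, $\wvel\cdot\grad\vvel$, and $\projcompl\nb{\wvel\cdot\grad\wvel}$. The periodic nonlinearity $\projcompl\nb{\wvel\cdot\grad\wvel}$ already lies on the Serrin-$4$ line from the estimate above, while the two cross terms are controlled by H\"older using the Serrin-$\frac{3}{2}$ integrability of $\wvel$ together with the spatial integrability of $\vvel\in\LR{s_0}(\Omega)$ and $\grad\vvel\in\LR{s_1}(\Omega)$ just established. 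This first places the right-hand side in $\LR{r}(\torus;\LR{q}(\Omega))$ for one convenient pair $(q,r)$ with $s_{q,r}=4$, and Lemma~\ref{lem:OseenReg} then yields $\wvel\in\calw_{q,r}$ and the pressure $\wpres\in\LR{r}(\torus;\DSR{1}{q}(\Omega))$ for that pair. To reach the full range stated in~\eqref{el:regularity_pp1} I expect to iterate once: the embedding Theorem~\ref{thm:embedding} applied to the just-obtained $\wvel\in\calw_{q,r}$ upgrades the integrability of $\wvel$ and $\grad\wvel$, which in turn allows the cross terms to be placed in $\LR{r}(\torus;\LR{q}(\Omega))$ for every $(q,r)$ on the Serrin-$4$ line, so that a second application of Lemma~\ref{lem:OseenReg} closes the argument. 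The main obstacle is precisely this final step: hitting the sharp Serrin-$4$ line uniformly in $(q,r)$ for the cross term $\wvel\cdot\grad\vvel$ appears to require the bootstrap, since the basic regularity only gives $\grad\vvel\in\LR{s_1}(\Omega)$ with $s_1\leq 3$, and the interplay between the steady-state and purely periodic parts must be exploited carefully to keep the scaling consistent.
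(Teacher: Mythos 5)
Your proposal follows essentially the same route as the paper: project the weak formulation to decouple into \eqref{sys:NavierStokesTP_sspart} and \eqref{sys:NavierStokesTP_pppart}, place $\wvel$ on the Serrin-$\frac32$ line by interpolation of $\LR{\infty}(\torus;\LR{2})\cap\LR{2}(\torus;\LR{6})$, deduce $\proj\nb{\wvel\cdot\grad\wvel}\in\LR{q}(\Omega)$ for $q\in(1,\frac32]$, invoke Theorem~\ref{thm:statNS} for the steady part and Lemma~\ref{lem:OseenReg} for the purely periodic part. The only point where you diverge is the alleged obstacle for $\wvel\cdot\grad\vvel$: no bootstrap is needed there. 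For any target pair with $s_{q_5,r_5}=4$ (hence $\frac{3}{q_5}\in(2,3)$) one chooses $q_0\in\set{2,6}$ on the Serrin-$\frac32$ line for $\wvel$ and $s_1\in(\frac43,3]$ for $\grad\vvel$ with $\frac{3}{q_5}=\frac{3}{q_0}+\frac{3}{s_1}$; the resulting time exponent $r_0\in\set{2,\infty}$ always dominates $r_5<2$, and the compactness of $\torus$ lowers it to $r_5$. So the direct H\"older estimate already covers the whole Serrin-$4$ line, which is how the paper argues; your fallback iteration via Theorem~\ref{thm:embedding} would also close the argument but is superfluous at this stage.
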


\begin{proof}
From the integrability of $\wvel$ 
we conclude by H\"older's inequality that 
$\wvel\cdot\grad\wvel
\in\LR{1}(\torus;\LR{3/2}(\Omega))\cap\LR{2}(\torus;\LR{1}(\Omega))$.
We thus have
$\proj f-\proj(\wvel\cdot\grad\wvel)\in\LR{1}(\Omega)\cap\LR{3/2}(\Omega)$,
and Theorem~\ref{thm:statNS}
yields the existence of $p$ such that \eqref{sys:NavierStokesTP_sspart}
as well as \eqref{el:regularity_ss1} and \eqref{el:regularity_ss2} hold.

To obtain the regularity statement for $\wvel$,
note that 
\eqref{el:regularity_ss2} implies
$\vvel\cdot\grad\wvel\in\LR{2}(\torus;\LR{q}(\Omega)^3)$
for all $q\in(1,2)$. 
Moreover, 
we have 
$\wvel\in\LR{2}(\torus;\LR{6}(\Omega)^3)\cap\LR{\infty}(\torus;\LR{2}(\Omega)^3)
\hookrightarrow \LR{r}(\torus;\LR{q}(\Omega)^3)$
for all $r\in[2,\infty]$ and $q\in[2,6]$
with $s_{q,r}=\frac{3}{2}$
by the Sobolev inequality and interpolation.
In virtue of \eqref{el:regularity_ss2} and H\"older's inequality,
we conclude
$\wvel\cdot\grad(\vvel+\wvel)\in \LR{r}(\torus;\LR{q}(\Omega)^3)$
for all $q\in(1,\frac{3}{2}]$ and $r\in[1,2)$ with $s_{q,r}=4$.
In consequence, we obtain
\[
\projcompl f -\vvel\cdot\grad\wvel-\wvel\cdot\grad\vvel
-\projcompl\np{\wvel\cdot\grad\wvel} \in
\LR{r}(\torus;\LR{q}(\Omega)^3)
\]
for all such $q$ and $r$.
Now
Lemma~\ref{lem:OseenReg}
yields the existence of a pressure $\wpres$ such that \eqref{sys:NavierStokesTP_pppart}
is satisfied in the strong sense
and
\eqref{el:regularity_pp1} holds.
\end{proof}

In the following lemmas, 
we always assume that $\wvel\in\calw_{q, r}$ for some given $q,r\in(1,\infty)$,
and the goal is to extend the range of one of the parameters $q$ or $r$ 
while the other one remains fixed. 
We use the assumption on 
additional regularity \eqref{el:reg.fct} or \eqref{el:reg.grad},
or the embedding properties from Theorem \ref{thm:embedding}
to conclude
\begin{equation}\label{el:w}
\wvel\in\LR{r_0}(\torus;\LR{q_0}(\Omega)^3)
\end{equation}
for a class of parameters $q_0,r_0\in[1,\infty]$,
and 
\begin{equation}\label{el:gradw}
\grad\wvel\in\LR{r_1}(\torus;\LR{q_1}(\Omega)^{3\times 3})
\end{equation}
for a class of parameters $q_1,r_1\in[1,\infty]$.
and we use Lemma \ref{lem:reg.initial} or Theorem~\ref{thm:statNS}
to deduce
\begin{equation}\label{el:v}
\vvel\in\LR{s_0}(\Omega)^3
\end{equation}
for certain $s_0\in[1,\infty]$
and 
\begin{equation}\label{el:gradv}
\grad\vvel\in\LR{s_1}(\Omega)^{3\times 3}
\end{equation}
for certain $s_1\in[1,\infty]$.
Then H\"older's inequality yields
suitable estimates of the nonlinear terms
and of the total right-hand side
\begin{equation}\label{el:rhs.tp}
\projcompl f -\vvel\cdot\grad\wvel-\wvel\cdot\grad\vvel
-\projcompl\np{\wvel\cdot\grad\wvel} \in
\LR{r_5}(\torus;\LR{q_5}(\Omega)^3)
\end{equation}
for a certain class of parameters $q_5,r_5\in(1,\infty)$.
Invoking now the regularity result from
Lemma~\ref{lem:OseenReg},
we conclude
$\wvel\in\calw_{q_5,r_5}$.

As a preparation, we first derive suitable estimates of the nonlinear terms
if we have $\wvel\in\calw_{q,r}$. 
In the next lemma we start with the nonlinear term 
\begin{equation}\label{el:wgradw}
\wvel\cdot\grad\wvel\in\LR{r_2}(\torus;\LR{q_2}(\Omega)^3)
\end{equation}
and we show better integrability 
for $\vvel$ and $\grad\vvel$ for sufficiently large $q$.

\begin{Lemma}\label{lem:wgradw}
Let $\wvel\in\calw_{q,r}$ 
for some $q,r\in(1,\infty)$. 
Then
\eqref{el:wgradw} for
\begin{enumerate}
\item[i.]
$\frac{3}{q_2}\in\big(\max\setl{0,s_{q,r}-1,\frac{3}{q}+s_{q,r}-2},\min\setl{3,\frac{6}{q}}\big]$
and $r_2=r$, and
\item[ii.]
$q_2=q$ and
$\frac{2}{r_2}\in\big(\max\setl{0,s_{q,r}-1},2\big]$.
\end{enumerate}
Moreover, 
if $s_{q,r}<\frac{3}{2}+\frac{1}{\max\set{2,r}}$ or $q>3$, then
the steady-state part
$\vvel$ satisfies \eqref{el:reg.ss1} and \eqref{el:reg.ss0}.
\end{Lemma}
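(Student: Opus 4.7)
The plan is to combine H\"older's inequality
\[
\norm{\wvel \cdot \grad\wvel}_{\LR{r_2}(\torus; \LR{q_2}(\Omega))} \leq \norm{\wvel}_{\LR{r_0}(\torus; \LR{q_0}(\Omega))} \norm{\grad\wvel}_{\LR{r_1}(\torus; \LR{q_1}(\Omega))},
\]
with $\tfrac{1}{r_0}+\tfrac{1}{r_1}=\tfrac{1}{r_2}$ and $\tfrac{1}{q_0}+\tfrac{1}{q_1}=\tfrac{1}{q_2}$, with the mixed-norm embedding Theorem~\ref{thm:embedding} applied separately to $\wvel$ and to $\grad\wvel$, using free parameters $\alpha\in[0,2]$ and $\beta\in[0,1]$, respectively. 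The constraints $\tfrac{2}{r_j}>\tfrac{2}{r}-\alpha$ (resp.\ $-\beta$) together with the H\"older identity in time force $\alpha+\beta>\tfrac{2}{r}-\tfrac{2}{r_2}$, after which the $q_0,q_1$ bounds in Theorem~\ref{thm:embedding} translate into the claimed bounds on $\tfrac{3}{q_2}$.

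For part~(i), where $r_2=r$, the three entries in the maximum correspond to three natural H\"older splits. First, taking $r_0=r$ and $r_1=\infty$ (which forces $\beta>2/r$) and placing $\wvel\in\LR{r}(\torus;\LR{\infty}(\Omega))$ via the Sobolev embedding $\WSR{2}{q}\hookrightarrow\LR{\infty}$ (valid for $q>3/2$) yields $\tfrac{3}{q_2}>s_{q,r}-1$. Dually, $r_0=\infty$, $r_1=r$ (which forces $\alpha>2/r$) combined with $\grad\wvel$ kept at the native spatial index $q$ yields $\tfrac{3}{q_2}>\tfrac{3}{q}+s_{q,r}-2$. The trivial third bound $\tfrac{3}{q_2}>0$ simply records that $\Omega$ being unbounded forces $q_2<\infty$. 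The upper bound $\tfrac{3}{q_2}\le\min\{3,6/q\}$ reflects $q_2\ge 1$ (from the finiteness of $\torus$) and $q_0,q_1\ge q$. Part~(ii) is proved in exactly the same way, now with $q_2=q$ held fixed and the analogous splittings carried out in the time exponent alone.

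For the statement on $\vvel$, I would apply Theorem~\ref{thm:statNS} to the steady system~\eqref{sys:NavierStokesTP_sspart} with right-hand side $\proj f-\proj(\wvel\cdot\grad\wvel)$. By~\eqref{cond:data} and the regularity of $\partial\Omega$, the contributions of $\proj f$ and $\proj\uvel_\ast$ satisfy the hypotheses of Theorem~\ref{thm:statNS} for every admissible exponent, so it only remains to produce some $\tilde q>3$ with $\proj(\wvel\cdot\grad\wvel)\in\LR{\tilde q}(\Omega)$. If $q>3$, this is immediate: $\WSR{1}{r}(\torus;\LR{q})\hookrightarrow\CR{0}(\torus;\LR{q})$ places $\wvel\in\LR{\infty}(\torus;\LR{q}(\Omega))$, while $\WSR{1}{q}\hookrightarrow\LR{\infty}$ places $\grad\wvel\in\LR{r}(\torus;\LR{\infty}(\Omega))$, yielding $\wvel\cdot\grad\wvel\in\LR{r}(\torus;\LR{q}(\Omega))$ directly. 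Otherwise, the hypothesis $s_{q,r}<\tfrac{3}{2}+\tfrac{1}{\max\{2,r\}}$ reduces, after an elementary case split on $r\ge 2$ versus $r<2$, to $\max\{0,s_{q,r}-1,\tfrac{3}{q}+s_{q,r}-2\}<1$, so part~(i) itself provides a valid $q_2>3$.

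The main obstacle I expect is the bookkeeping: verifying that each H\"older split is consistent with the parameter restrictions of Theorem~\ref{thm:embedding} (in particular at the endpoint cases $q_j=\infty$ or $r_j=\infty$, where one has to check which case of the case distinction in Theorem~\ref{thm:embedding} applies), and confirming that the three splits together cover precisely the range claimed in~(i).
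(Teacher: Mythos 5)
Your proposal is correct and follows essentially the same route as the paper: apply the mixed-norm embedding Theorem~\ref{thm:embedding} separately to $\wvel$ and $\grad\wvel$ with free parameters $\alpha,\beta$, combine via H\"older, and then feed $\proj f-\proj(\wvel\cdot\grad\wvel)\in\LR{\tilde q}(\Omega)$ for some $\tilde q>3$ into Theorem~\ref{thm:statNS} (your reduction of the hypothesis $s_{q,r}<\frac32+\frac{1}{\max\{2,r\}}$ to the lower bound in i.\ being $<1$ is exactly the paper's observation). The only differences are cosmetic: the paper fixes $r_0=\infty$, $r_1=r$ for part i.\ rather than your split with $\wvel\in\LR{r}(\torus;\LR{\infty}(\Omega))$, but your general $\alpha,\beta$-framework subsumes both choices.
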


\begin{proof}
At first, 
Theorem~\ref{thm:embedding} yields
\eqref{el:w} for $r_0=\infty$ and 
$\frac{3}{q_0}\in\big(\max\setl{0,s_{q,r}-1},\frac{3}{q}\big]$,
and \eqref{el:gradw} for $r_1=r$ and
$\frac{3}{q_1}\in\big(\max\setl{0,\frac{3}{q}-1},\frac{3}{q}\big]$,
so that we deduce 
\eqref{el:wgradw} for $r_2=r$ and 
$q_2$ as asserted in i.
Moreover, Theorem \ref{thm:embedding}
yields
$\wvel\in\LR{r_0}(\torus;\LR{q}(\Omega)^3)$ for $r_0\in[1,\infty]$ 
as well as 
$\grad\wvel\in\LR{r_1}(\torus;\LR{\infty}(\Omega)^{3\times 3})$
for
$\frac{2}{r_1}
\in\big(\max\setl{0,s_{q,r}-1},2\big]$.
Now H\"older's inequality implies the
integrability of $\wvel\cdot\grad\wvel$ asserted in ii.

If additionally $s_{q,r}<\frac{3}{2}+\frac{1}{\max\set{2,r}}$, then 
the lower bound in i.~is smaller than $1$,
so that 
$\proj f-\proj(\wvel\cdot\grad\wvel)\in\LR{q_2}(\Omega)^3$
for some $q_2\in(3,\infty)$.
The same follows from ii.~for $q_2=q>3$.
Now Theorem \ref{thm:statNS}
yields \eqref{el:reg.ss1} and \eqref{el:reg.ss0}.
\end{proof}

Next we treat the nonlinear terms that involves $\vvel$ and $\grad\vvel$, namely
we show that $\wvel\in\calw_{q,r}$ implies
\begin{equation}\label{el:vgradw}
\vvel\cdot\grad\wvel\in\LR{r_3}(\torus;\LR{q_3}(\Omega)^3)
\end{equation}
and
\begin{equation}\label{el:wgradv}
\wvel\cdot\grad\vvel\in\LR{r_4}(\torus;\LR{q_4}(\Omega)^3)
\end{equation}
for suitable parameters $q_3,r_3,q_4,r_4\in[1,\infty]$.

\begin{Lemma}\label{lem:vgradwwgradv}
Let $\wvel\in\calw_{q,r}$ 
for some $q,r\in(1,\infty)$. 
Then
\eqref{el:vgradw} holds for
\begin{enumerate}
\item[i.]
$\frac{3}{q_3}\in \bp{\max\setl{0,\frac{3}{q}-1},\min\setl{3,\frac{3}{q}+\frac{3}{2}}}$
and $r_3=r$, and
\item[ii.]
$q_3=q$ and
$\frac{2}{r_3}\in\big(\max\setl{0,\frac{2}{r}-1},2\big]$,
\end{enumerate}
and 
\eqref{el:wgradv} holds for 
\begin{enumerate}
\item[iii.]
$\frac{3}{q_4}\in 
\bp{\max\set{\frac{3}{4},\frac{3}{q}-\frac{3}{4},\frac{6}{q}-\frac{11}{4}},\min\set{3,\frac{3}{q}+\frac{9}{4}}}$
and $r_4=r$, and
\item[iv.]
$\frac{3}{q_4}\in 
\bp{0,\min\set{3,\frac{3}{q}+\frac{9}{4}}}$
and $r_4=r$ if $q> 3$, and
\item[v.]
$q_4=q$ and
$\frac{2}{r_4}\in\big(\max\set{0,\frac{2}{r}-1},2\big]$.
\end{enumerate}
\end{Lemma}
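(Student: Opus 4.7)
The plan is to bound each nonlinear quantity by H\"older's inequality in space---the time variable plays a passive role, since $\vvel$ and $\grad\vvel$ do not depend on $t$---combining the spatial integrability of $\vvel$ and $\grad\vvel$ from Lemma~\ref{lem:reg.initial} (and, for case iv., from Lemma~\ref{lem:wgradw}) with the mixed-norm integrability of $\wvel$ and $\grad\wvel$ extracted from $\wvel\in\calw_{q,r}$ via Theorem~\ref{thm:embedding}. Schematically,
\[
\norml{\vvel\cdot\grad\wvel}_{\LR{r_3}(\torus;\LR{q_3}(\Omega))}
\leq \norm{\vvel}_{\LR{s_0}(\Omega)}\,\norml{\grad\wvel}_{\LR{r_3}(\torus;\LR{q_1}(\Omega))},
\quad \tfrac{1}{q_3}=\tfrac{1}{s_0}+\tfrac{1}{q_1},
\]
and
\[
\norml{\wvel\cdot\grad\vvel}_{\LR{r_4}(\torus;\LR{q_4}(\Omega))}
\leq \norml{\wvel}_{\LR{r_4}(\torus;\LR{q_0}(\Omega))}\,\norm{\grad\vvel}_{\LR{s_1}(\Omega)},
\quad \tfrac{1}{q_4}=\tfrac{1}{q_0}+\tfrac{1}{s_1}.
\]

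For claim~i., I would fix $r_3=r$ and apply Theorem~\ref{thm:embedding} with $\beta=0$, yielding $\grad\wvel\in\LR{r}(\torus;\LR{q_1}(\Omega))$ for $\tfrac{3}{q_1}\in(\max\set{0,\tfrac{3}{q}-1},\tfrac{3}{q}]$. Combined with $\vvel\in\LR{s_0}(\Omega)$ for $s_0\in(2,\infty)$, i.e.\ $\tfrac{3}{s_0}\in(0,\tfrac{3}{2})$, the sum $\tfrac{3}{q_3}=\tfrac{3}{s_0}+\tfrac{3}{q_1}$ reproduces exactly the stated range (with the $\min\set{3,\cdots}$ coming from the trivial requirement $q_3\geq 1$). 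For claim~ii., I would fix $q_3=q$, let $s_0\to\infty$ and $q_1\to q^+$, and exploit Theorem~\ref{thm:embedding} with $\beta$ approaching~$1$; the corresponding admissible range $\tfrac{2}{r_1}\in(\max\set{0,\tfrac{2}{r}-1},2]$ is transferred directly to $r_3=r_1$.

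Claims~iii.--v.\ follow the same scheme with the roles of $\vvel$ and $\wvel$ exchanged. For claim~iii.\ I would take $r_4=r$ and $\alpha=0$ in Theorem~\ref{thm:embedding}, so that $\wvel\in\LR{r}(\torus;\LR{q_0}(\Omega))$ with $q_0$ ranging over its full Sobolev-type interval, while $\grad\vvel\in\LR{s_1}(\Omega)$ for $s_1\in(4/3,3]$; the three candidates in the lower bound $\max\set{\tfrac{3}{4},\tfrac{3}{q}-\tfrac{3}{4},\tfrac{6}{q}-\tfrac{11}{4}}$ correspond to three distinct extremal configurations of the pair $(q_0,s_1)$, and the proof amounts to checking that each interior point of the stated interval is attained by some admissible pair. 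Claim~iv.\ relies on the upgrade supplied by Lemma~\ref{lem:wgradw}: under $q>3$ the range of $s_1$ extends to $(4/3,\infty]$, and letting $s_1\to\infty$ eliminates the $\tfrac{3}{s_1}$-contribution from the lower bound on $\tfrac{3}{q_4}$. Claim~v.\ mirrors claim~ii., holding $q_4=q$ and using Theorem~\ref{thm:embedding} with $\alpha$ approaching~$2$ to gain time integrability of $\wvel$.

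The main obstacle is the combinatorial bookkeeping: for each of the five announced ranges one must identify the optimal choice of the embedding parameter $\alpha$ (or~$\beta$) together with the extremal values of $s_0$ and $s_1$ from Lemma~\ref{lem:reg.initial}, and verify that the prescribed interval of exponents is indeed saturated. Once these extremal configurations are identified, each sub-claim collapses to a single H\"older estimate composed with Theorem~\ref{thm:embedding}.
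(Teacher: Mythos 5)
Your overall strategy---space--time H\"older estimates combining the mixed-norm integrability of $\wvel$ and $\grad\wvel$ from Theorem~\ref{thm:embedding} with the purely spatial integrability of $\vvel$ and $\grad\vvel$---is exactly the paper's, and your treatment of claims~i., ii.\ and iv.\ matches the actual proof. However, there is a genuine gap in claim~iii. You take $\grad\vvel\in\LR{s_1}(\Omega)$ only for $s_1\in(4/3,3]$, i.e.\ $\tfrac{3}{s_1}\geq 1$, as provided by Lemma~\ref{lem:reg.initial}. Since Theorem~\ref{thm:embedding} with $\alpha=0$ only gives $\wvel\in\LR{r}(\torus;\LR{q_0}(\Omega))$ for $\tfrac{3}{q_0}>\max\set{0,\tfrac{3}{q}-2}$, H\"older then yields $\tfrac{3}{q_4}=\tfrac{3}{q_0}+\tfrac{3}{s_1}>\max\set{1,\tfrac{3}{q}-1}$, which for $q>12/7$ is strictly larger than the claimed lower bound $\max\set{\tfrac{3}{4},\tfrac{3}{q}-\tfrac{3}{4},\tfrac{6}{q}-\tfrac{11}{4}}$; for instance, for $q\geq 4$ the stated range contains $\tfrac{3}{q_4}\in(\tfrac{3}{4},1]$, which none of your configurations can reach. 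The missing step is an upgrade of $\grad\vvel$: since $\wvel\in\calw_{q,r}$ improves the integrability of $\proj\nb{\wvel\cdot\grad\wvel}$ (cf.\ Lemma~\ref{lem:wgradw}), one can re-apply Theorem~\ref{thm:statNS} to the steady-state system \eqref{sys:NavierStokesTP_sspart} and obtain $\grad\vvel\in\LR{s_1}(\Omega)$ for all $\tfrac{1}{s_1}\in\bp{\max\set{\tfrac{1}{4},\tfrac{1}{q}-\tfrac{1}{4}},\tfrac{3}{4}}$, i.e.\ $s_1$ up to (but excluding) $4$; it is precisely this extended range, paired with the admissible $q_0$, that produces the three candidates in the stated lower bound.

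A second, smaller issue concerns claim~v. Taking ``$\alpha$ approaching $2$'' in Theorem~\ref{thm:embedding} yields no gain in spatial integrability of $\wvel$, so keeping $q_4=q$ would then require $\grad\vvel\in\LR{\infty}(\Omega)$, which is only available when $q>3$ (again via Lemma~\ref{lem:wgradw}). For $q\leq 3$ the correct balance is $\alpha$ just below $1$: this gives $\wvel\in\LR{r_0}(\torus;\LR{q_0}(\Omega))$ with $\tfrac{3}{q_0}=\tfrac{3}{q}-1-\delta$ and $\tfrac{2}{r_0}>\max\set{0,\tfrac{2}{r}-1+\delta}$, which pairs with $\grad\vvel\in\LR{3/(1+\delta)}(\Omega)$ to keep $q_4=q$ and, after letting $\delta\to0$, yields the claimed range of $r_4$. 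So claim~v.\ requires the case distinction $q\leq 3$ versus $q>3$, which your sketch omits.
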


\begin{proof}
Theorem~\ref{thm:embedding} implies
\eqref{el:gradw} for $\frac{3}{q_1}\in\big(\max\set{0,\frac{3}{q}-1},\frac{3}{q}\big]$ and $r_1=r$
as well as for $\frac{3}{q_1}=\frac{3}{q}-\delta$ 
and $\frac{2}{r_1}\in\big(\max\set{0,\frac{2}{r}-(1-\delta)},\frac{2}{r}\big]$
for $\delta>0$ small.
Moreover, by Lemma \ref{lem:reg.initial} we have \eqref{el:v} for $s_0\in(2,\infty)$,
and H\"older's inequality implies
\eqref{el:vgradw} for $q_3$ and $r_3$ as in i.~or ii.

Theorem~\ref{thm:statNS} and Lemma~\ref{lem:reg.initial} yield
and \eqref{el:gradv} for all 
$\frac{1}{s_1}\in\bp{\max\set{\frac{1}{4},\frac{1}{q}-\frac{1}{4}},\frac{3}{4}}$.
Theorem~\ref{thm:embedding} implies
\eqref{el:w}  for $r_0=r$ 
and $\frac{3}{q_0}\in\big(\max\set{0,\frac{3}{q}-2},\frac{3}{q}\big]$.
H\"older's inequality now yields
\eqref{el:wgradv}
for $q_4$ and $r_4$ as in iii.
Additionally, if $q> 3$,
then we obtain  \eqref{el:reg.ss1} by Lemma~\ref{lem:wgradw}.
Theorem \ref{thm:embedding} further implies
\eqref{el:w} for $r_0=r$ and $q_0\in[q,\infty]$,
so that H\"older's inequality yields
\eqref{el:wgradv}
for $q_4$ and $r_4$ as in iv.

For v., we distinguish two cases.
Firstly, if $q\leq 3$, then 
Theorem~\ref{thm:embedding} implies
\eqref{el:w}  for $\frac{3}{q_0}=\frac{3}{q}-1-\delta$ 
and $\frac{2}{r_0}\in\big(\max\set{0,\frac{2}{r}-1+\delta},2\big]$ for $\delta\in(0,1)$,
and Lemma~\ref{lem:reg.initial} yields 
\eqref{el:gradv} for all $s_1=\frac{3}{1+\delta}$,
so that H\"older's inequality implies
\eqref{el:wgradv} for $q_4=q_0$ and $r_4=r_0$.
Secondly, if $q>3$, then we use Lemma~\ref{lem:wgradw} again
to conclude \eqref{el:reg.ss1}.
Moreover, Theorem~\ref{thm:embedding} yields
\eqref{el:w} for $q_0=q$ and $r_0\in[1,\infty]$,
and we conclude
\eqref{el:wgradv} for $q_4=q$ and $r_4=r_0\in[1,\infty]$.
Combining both cases, we obtain v.
\end{proof}

The results from Lemma~\ref{lem:wgradw} and Lemma~\ref{lem:vgradwwgradv}
are not sufficient to conclude the proof,
and we need to invoke the additional regularity assumptions 
\eqref{el:reg.fct} or \eqref{el:reg.grad}
to get \eqref{el:wgradw} for other parameters $q_2$ and $r_2$.
We define $\delta_{\kappa,\rho}>0$ by
\[
\delta_{\kappa,\rho}
\coloneqq
\begin{cases}
1-s_{\kappa,\rho} & \text{if \eqref{el:reg.fct} is assumed},
\\
2-s_{\kappa,\rho} & \text{if \eqref{el:reg.grad} is assumed}.
\end{cases}
\]

\begin{Lemma}\label{lem:wgradw.betterreg}
Assume either \eqref{el:reg.fct} or \eqref{el:reg.grad},
and let $\wvel\in\calw_{q,r}$ for some $q,r\in(1,\infty)$.
Then
\eqref{el:wgradw} holds for
\begin{enumerate}
\item[i.]
$\frac{3}{q_2}\in\big(\max\setl{\frac{3}{\kappa},\frac{3}{q}-\delta_{\kappa,\rho}},\min\setl{3,\frac{3}{q}+\frac{3}{\kappa}}\big]$
and $r_2=r$, and
\item[ii.]
$q_2=q$ and
$\frac{2}{r_2}\in\big(\max\setl{\frac{2}{\rho},\frac{2}{r}-\delta_{\kappa,\rho}},2\big]$.
\end{enumerate}
\end{Lemma}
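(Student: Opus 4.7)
The plan is to estimate $\wvel \cdot \grad\wvel$ by H\"older's inequality in both time and space, using the additional integrability assumption on one factor and Theorem~\ref{thm:embedding} applied to $\wvel \in \calw_{q,r}$ on the other. Under \eqref{el:reg.fct} we place $\wvel$ into $\LR{\rho}(\torus;\LR{\kappa}(\Omega))$ directly and bound $\grad\wvel$ via Theorem~\ref{thm:embedding}; under \eqref{el:reg.grad} the roles are interchanged. Note that \eqref{el:reg.fct} implies $\rho > 2$ and $\kappa > 3$, while \eqref{el:reg.grad} forces $\kappa > 3/2$; in both situations the interpolation parameters chosen below are admissible.

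For part~(i) we fix $r_2 = r$ and split in time via $\tfrac{1}{r} = \tfrac{1}{\rho} + \tfrac{1}{r_*}$, allowing $r_* = \infty$ together with the trivial embedding $\LR{\rho}(\torus) \hookrightarrow \LR{r}(\torus)$ when $r \leq \rho$. Under \eqref{el:reg.fct} we apply Theorem~\ref{thm:embedding} to $\grad\wvel$ with parameter $\beta > \tfrac{2}{\rho}$, admissible since $\rho > 2$; this yields $\grad\wvel \in \LR{r_*}(\torus;\LR{q_*}(\Omega))$ for any $q_*$ with $\tfrac{3}{q_*} \in (\tfrac{3}{q} - (1-\beta),\, \tfrac{3}{q}]$. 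Setting $\tfrac{1}{q_2} = \tfrac{1}{\kappa} + \tfrac{1}{q_*}$ and letting $\beta \downarrow \tfrac{2}{\rho}$ gives $\tfrac{3}{q_2} > \tfrac{3}{\kappa} + \tfrac{3}{q} - 1 + \tfrac{2}{\rho} = \tfrac{3}{q} - \delta_{\kappa,\rho}$; the remaining bounds $\tfrac{3}{q_2} > \tfrac{3}{\kappa}$ and $\tfrac{3}{q_2} \leq \tfrac{3}{\kappa} + \tfrac{3}{q}$ follow directly, and the upper cut-off at $3$ accounts for the constraint $q_2 \geq 1$. Under \eqref{el:reg.grad} we instead apply Theorem~\ref{thm:embedding} to $\wvel$ with parameter $\alpha > \tfrac{2}{\rho}$, obtaining $\tfrac{3}{q_*} > \tfrac{3}{q} - (2 - \alpha)$; the limit $\alpha \downarrow \tfrac{2}{\rho}$ gives the same range.

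For part~(ii) we fix $q_2 = q$ and split in space via $\tfrac{1}{q} = \tfrac{1}{\kappa} + \tfrac{1}{q_*}$, so $\tfrac{3}{q_*} = \tfrac{3}{q} - \tfrac{3}{\kappa}$. Under \eqref{el:reg.fct}, Theorem~\ref{thm:embedding} then forces $\beta < 1 - \tfrac{3}{\kappa}$; letting $\beta \uparrow 1 - \tfrac{3}{\kappa}$ and combining with $\wvel \in \LR{\rho}(\torus; \LR{\kappa}(\Omega))$ yields
\[
\tfrac{2}{r_2} = \tfrac{2}{\rho} + \tfrac{2}{r_*} > \tfrac{2}{\rho} + \tfrac{2}{r} - (1 - \tfrac{3}{\kappa}) = \tfrac{2}{r} - \delta_{\kappa,\rho},
\]
which matches the claimed range (the bound $\tfrac{2}{r_2} > \tfrac{2}{\rho}$ being immediate). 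Under \eqref{el:reg.grad} the analogous argument with $\alpha < 2 - \tfrac{3}{\kappa}$ (admissible since $\kappa > 3/2$) produces the same conclusion.

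The main technical obstacle is the careful treatment of edge cases where a H\"older exponent $r_*$ or $q_*$ would need to be infinite or lie on the boundary of the range allowed by Theorem~\ref{thm:embedding}. These are handled by exploiting the finite measure of $\torus$, which permits trivial Lebesgue embeddings in time, and by keeping the auxiliary parameter $\alpha$ or $\beta$ strictly in an open interval and passing to the limit at the end.
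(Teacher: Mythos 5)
Your proof is correct and takes essentially the same route as the paper: split $\wvel\cdot\grad\wvel$ by H\"older's inequality, placing one factor in $\LR{\rho}(\torus;\LR{\kappa}(\Omega))$ via the assumed extra regularity and the other via Theorem~\ref{thm:embedding} with $\beta=\frac{2}{\rho}$ (resp.\ $\beta=1-\frac{3}{\kappa}$) under \eqref{el:reg.fct}, or $\alpha=\frac{2}{\rho}$ (resp.\ $\alpha=2-\frac{3}{\kappa}$) under \eqref{el:reg.grad}. The only cosmetic difference is that you keep $\alpha$, $\beta$ strictly inside the admissible interval and pass to the limit, whereas the paper uses the closed endpoints of Theorem~\ref{thm:embedding} directly; both yield the same (open) parameter ranges for $q_2$ and $r_2$.
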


\begin{proof}
At first, let us assume \eqref{el:reg.fct}.
Then $\rho>2$
and Theorem~\ref{thm:embedding}
with $\beta=\frac{2}{\rho}\in(0,1)$
yields \eqref{el:gradw} 
for 
$\frac{3}{q_1}\in
\big(\max\setl{0,\frac{3}{q}+s_{\kappa,\rho}-1-\frac{3}{\kappa}},\frac{3}{q}\big]$
and $\frac{2}{r_1}=\frac{2}{r}-\frac{2}{\rho}$.
Combining this with \eqref{el:reg.fct}
and using H\"older's inequality,
we obtain \eqref{el:wgradw}
for $q$, $r$ as in i.
Moreover, 
we have
$\kappa>3$,
and Theorem~\ref{thm:embedding}
with $\beta=1-\frac{3}{\kappa}\in(0,1)$
yields \eqref{el:gradw} 
for 
$\frac{2}{r_1}\in
\big(\max\setl{0,\frac{2}{r}+s_{\kappa,\rho}-1-\frac{2}{\rho}},2\big]$
and $\frac{3}{q_1}=\frac{3}{q}-\frac{3}{\kappa}$.
Combining this with \eqref{el:reg.fct}
and using H\"older's inequality,
we obtain \eqref{el:wgradw}
for $q$, $r$ as in ii.

Now let us assume \eqref{el:reg.grad}.
From
Theorem~\ref{thm:embedding}
with $\alpha=\frac{2}{\rho}\in(0,2)$
we deduce \eqref{el:w} 
for
$\frac{3}{q_0}\in
\big(\max\setl{0,\frac{3}{q}+s_{\kappa,\rho}-2-\frac{3}{\kappa}},\frac{3}{q}\big]$
and $\frac{2}{r_0}=\frac{2}{r}-\frac{2}{\rho}$.
Combining this with \eqref{el:reg.grad}
and using H\"older's inequality,
we also obtain \eqref{el:wgradw} in this case
for $q$, $r$ as in i.
Moreover,
we have $\kappa>\frac{3}{2}$,
and
Theorem~\ref{thm:embedding}
with $\alpha=2-\frac{3}{\kappa}\in(0,2)$
yields \eqref{el:w} 
for
$\frac{2}{r_0}\in
\big(\max\setl{0,\frac{2}{r}+s_{\kappa,\rho}-2-\frac{2}{\rho}},2\big]$
and $\frac{3}{q_0}=\frac{3}{q}-\frac{3}{\kappa}$.
Combining this with \eqref{el:reg.grad}
and using H\"older's inequality,
we also obtain \eqref{el:wgradw} in this case
for $q$, $r$ as claimed in ii.
\end{proof}

Now we have prepared everything to iteratively 
increase the range of parameters $q$, $r$ such that
$\wvel\in\calw_{q,r}$.
By Lemma \ref{lem:reg.initial},
we start with $q$, $r$ such that $s_{q,r}=4$. 
In particular, both parameters cannot be chosen 
large,
and we use Lemma~\ref{lem:vgradwwgradv}
and Lemma~\ref{lem:wgradw.betterreg}
to extend the range of admissible parameters.
An iteration leads to sufficiently large parameters such that Lemma~\ref{lem:wgradw} can be invoked
to further iterate until the full range $(1,\infty)$ 
is admissible for both parameters,
which proves the regularity result from
Theorem~\ref{thm:embedding}.

\begin{proof}[Proof of Theorem~\ref{thm:regularity}]

As a first step, we show that 
$\wvel\in\calw_{q,r}$ for all $q\in(3,\infty)$ 
and all $r\in(2,\infty)$.
To do so, 
observe that both \eqref{el:reg.fct} and \eqref{el:reg.grad} imply
that $\kappa>3$ or $\rho>2$.
In what follows, we distinguish these two cases:

Consider the case $\kappa>3$ at first.
We show that
$\wvel\in\calw_{\tilde q, r}$ for all $\tilde q\in(1,\kappa)$ and $r\in(1,2)$.
Let $q\in(1,\frac{3}{2})$ and $r\in(1,2)$ 
with $s_{q,r}=4$,
so that $\wvel\in\calw_{q,r}$
by Lemma~\ref{lem:reg.initial}.
Then we have \eqref{el:wgradw}
for $q_2$, $r_2$ 
as in i.~of Lemma~\ref{lem:wgradw.betterreg},
we have \eqref{el:vgradw} for $q_3$, $r_3$
as in i.~of Lemma~\ref{lem:vgradwwgradv},
and we have \eqref{el:wgradv} for $q_4$, $r_4$
as in iii.~of Lemma~\ref{lem:vgradwwgradv}.
We thus obtain
\eqref{el:rhs.tp}
for
$\frac{3}{q_5}\in\bp{\max\setl{\frac{3}{\kappa},\frac{3}{q}-\delta_{\kappa,\rho},\frac{3}{q}-\frac{3}{4},\frac{3}{4},\frac{6}{q}-\frac{11}{4}},\min\setl{3,\frac{3}{q}+\frac{3}{\kappa},\frac{3}{q}+\frac{3}{2}}}$
and $r_5=r$.
Since $\kappa>\frac{3}{2}$, this interval is non-empty, 
and by the regularity result from Lemma~\ref{lem:OseenReg}, we conclude
$\wvel\in\calw_{\tilde q,r}$ for 
$\frac{3}{\tilde q}\in\bp{\max\setl{\frac{3}{\kappa},\frac{3}{q}-\delta_{\kappa,\rho},\frac{3}{q}-\frac{3}{4},\frac{3}{4},\frac{6}{q}-\frac{11}{4}},\min\setl{3,\frac{3}{q}+\frac{3}{\kappa},\frac{3}{q}+\frac{3}{2}}}$.
Repeating this argument iteratively with $q$ replaced with a suitable $\tilde q$ within this range,
we obtain
$\wvel\in\calw_{\tilde q,r}$ for $\tilde q\in(1,\min\set{4,\kappa})$.
If $\kappa\leq 4$, this completes the first step. 
If this is not the case,
we repeat the above argument for $q\in(3,4)$, 
but we use iv.~of Lemma~\ref{lem:vgradwwgradv} instead of iii.,
which leads to
\eqref{el:rhs.tp}
for
$\frac{3}{q_5}\in\bp{\max\setl{\frac{3}{\kappa},\frac{3}{q}-\delta_{\kappa,\rho}},\min\setl{3,\frac{3}{q}+\frac{3}{\kappa},\frac{3}{q}+\frac{3}{2}}}$
and $r_5=r$,
and thus $\wvel\in\calw_{\tilde q,r}$ for 
for
$\frac{3}{\tilde q}\in\bp{\max\setl{\frac{3}{\kappa},\frac{3}{q}-\delta_{\kappa,\rho}},\min\setl{3,\frac{3}{q}+\frac{3}{\kappa},\frac{3}{q}+\frac{3}{2}}}$.
Repeating now this argument a sufficient number of times for admissible $\tilde q>q$ instead of $q$,
we finally arrive at
$\wvel\in\calw_{\tilde q, r}$ for all $\tilde q\in(1,\kappa)$ and $r\in(1,2)$.

Since we assume $\kappa>3$, we can now choose $q\in(3,\kappa)$.
Let $r\in(1,2)$ such that $s_{q,r}<2$.
The previous step implies $\wvel\in\calw_{q, r}$,
and we conclude
\eqref{el:wgradw}
for $q_2$, $r_2$ 
as in ii.~of Lemma~\ref{lem:wgradw},
we have \eqref{el:vgradw} for $q_3$, $r_3$
as in ii.~of Lemma~\ref{lem:vgradwwgradv},
and we have \eqref{el:wgradv} for $q_4$, $r_4$
as in v.~of Lemma~\ref{lem:vgradwwgradv}.
We thus obtain
\eqref{el:rhs.tp}
for $q_5=q$ and 
$\frac{2}{r_5}
\in\big(\max\setl{0,s_{q,r}-1},2\big]$.
Invoking Lemma~\ref{lem:OseenReg},
we obtain
$\wvel\in\calw_{q,\tilde r}$ for
$\frac{2}{\tilde r}
\in\bp{\max\setl{0,\frac{2}{r}+\frac{3}{q}-1},2}$,
and an iteration as above yields 
$\wvel\in\calw_{\tilde q,\tilde r}$ for all $\tilde q\in(3,\kappa)$ 
and all $\tilde r\in(1,\infty)$.

Now let $q\in(3,\kappa)$ and $r\in(2,\infty)$.
Then $s_{q,r}<2$ and
since $\wvel\in\calw_{q,r}$,
we have \eqref{el:wgradw}
for $q_2$, $r_2$ 
as in i.~of Lemma~\ref{lem:wgradw},
we have \eqref{el:vgradw} for $q_3$, $r_3$
as in i.~of Lemma~\ref{lem:vgradwwgradv},
and we have \eqref{el:wgradv} for $q_4$, $r_4$
as in iv.~of Lemma~\ref{lem:vgradwwgradv}.
We thus obtain
\eqref{el:rhs.tp}
for
$\frac{3}{q_5}\in\bp{\max\setl{0,s_{q,r}-1,\frac{3}{q}+s_{q,r}-2},\frac{6}{q}}$
and $r_5=r$,
and Lemma~\ref{lem:OseenReg}
yields
$\wvel\in\calw_{\tilde q,r}$ for
$\frac{3}{\tilde q}\in\bp{\max\setl{0,\frac{3}{q}+\frac{2}{r}-1,\frac{3}{q}+s_{q,r}-2},\frac{6}{q}}$.
An iteration of this argument
leads to $\wvel\in\calw_{\tilde q,\tilde r}$ for
all $\tilde q\in(\frac{3}{2},\infty)$ and all $\tilde r\in(2,\infty)$.

Now consider the case $\rho>2$.
We first extend the range for $r$ and show that 
$\wvel\in\calw_{\tilde q,\tilde r}$ 
for all $\tilde q\in(1,\frac{3}{2})$ and $\tilde r\in(1,\rho)$.
For this, fix $q\in(1,\frac{3}{2})$.
Lemma~\ref{lem:reg.initial}
yields $\wvel\in\calw_{q,r}$ for some $r\in(1,2)$
such that $s_{q,r}=4$.
Then we have \eqref{el:wgradw}
for $q_2$, $r_2$ 
as in ii.~of Lemma~\ref{lem:wgradw.betterreg},
we have \eqref{el:vgradw} for $q_3$, $r_3$
as in ii.~of Lemma~\ref{lem:vgradwwgradv},
and we have \eqref{el:wgradv} for $q_4$, $r_4$
as in v.~of Lemma~\ref{lem:vgradwwgradv}.
We thus obtain
\eqref{el:rhs.tp}
for $q_5=q$ and 
$\frac{2}{r_5}
\in\big(\max\setl{\frac{2}{\rho},\frac{2}{r}-\delta_{\kappa,\rho},\frac{2}{r}-1},2\big]$.
Using the regularity result from Lemma \ref{lem:OseenReg},
we now obtain
$\wvel\in\calw_{q,\tilde r}$ for
$\frac{2}{\tilde r}
\in\bp{\max\setl{\frac{2}{\rho},\frac{2}{r}-\delta_{\kappa,\rho},\frac{2}{r}-1},2}$.
Repeating this argument with $r$ replaced with some $\tilde r>r$ in this range,
we can successively increase the admissible range
for $\tilde r$ until we obtain 
$\wvel\in\calw_{\tilde q,\tilde r}$ for all $\tilde q\in(1,\frac{3}{2})$ and $\tilde r\in(1,\rho)$.

Since $\rho>2$, we can choose $r\in(2,\rho)$, 
and from $\wvel\in\calw_{q,r}$
for $q\in(1,\frac{3}{2})$
and we conclude 
\eqref{el:wgradw}
for $q_2$, $r_2$ 
as in i.~of Lemma~\ref{lem:wgradw},
we have \eqref{el:vgradw} for $q_3$, $r_3$
as in i.~of Lemma~\ref{lem:vgradwwgradv},
and we have \eqref{el:wgradv} for $q_4$, $r_4$
as in iii.~of Lemma~\ref{lem:vgradwwgradv}.
We thus obtain
\eqref{el:rhs.tp}
for $r_5=r$ and 
$\frac{3}{q_5}
\in\big(\max\setl{\frac{3}{4},\frac{3}{q}-\frac{3}{4},\frac{6}{q}-\frac{11}{4},s_{q,r}-1, \frac{3}{q}+s_{q,r}-2},\min\set{3,\frac{6}{q}}\big]$.
Invoking Lemma~\ref{lem:OseenReg},
we obtain
$\wvel\in\calw_{\tilde q, r}$ for
$\frac{3}{\tilde q}
\in\bp{\max\setl{\frac{3}{4},\frac{3}{q}-\frac{3}{4},\frac{6}{q}-\frac{11}{4},s_{q,r}-1, \frac{3}{q}+s_{q,r}-2},\min\set{3,\frac{6}{q}}}$,
and an iteration as above yields 
$\wvel\in\calw_{\tilde q,r}$ for all $\tilde q\in(1,4)$.
Now we can choose $q=\tilde q>3$,
and repeating the argument with
iv.~of Lemma~\ref{lem:vgradwwgradv}
instead of iii., we obtain
$\wvel\in\calw_{\tilde q, r}$ for
$\frac{3}{\tilde q}
\in\bp{\max\setl{0,s_{q,r}-1, \frac{3}{q}+s_{q,r}-2},\min\set{3,\frac{6}{q}}}$.
Another iteration now leads to
$\wvel\in\calw_{\tilde q,\tilde r}$ for all $\tilde q\in(1,\infty)$ 
and all $\tilde r\in(2,\rho)$
if $\rho>2$.

Now let $q\in(3,\infty)$ and $r\in(2,\rho)$.
Then $s_{q,r}<2$ and
since $\wvel\in\calw_{q,r}$,
we have \eqref{el:wgradw}
for $q_2$, $r_2$ 
as in ii.~of Lemma~\ref{lem:wgradw},
we have \eqref{el:vgradw} for $q_3$, $r_3$
as in ii.~of Lemma~\ref{lem:vgradwwgradv},
and we have \eqref{el:wgradv} for $q_4$, $r_4$
as in v.~of Lemma~\ref{lem:vgradwwgradv}.
We thus obtain
\eqref{el:rhs.tp}
for $q_5=q$ and 
$\frac{2}{r_5}
\in\big(\max\setl{0,s_{q,r}-1},2\big]$.
Invoking Lemma~\ref{lem:OseenReg},
we obtain
$\wvel\in\calw_{q,\tilde r}$ for
$\frac{2}{\tilde r}
\in\bp{\max\setl{0,\frac{2}{r}+\frac{3}{q}-1},2}$,
and an iteration as above yields 
$\wvel\in\calw_{\tilde q,\tilde r}$ for all $\tilde q\in(3,\infty)$ 
and all $\tilde r\in(1,\infty)$.

Combining these two cases and using that $\torus$ is compact,
we have shown that
$\wvel\in\calw_{q,r}$ for all $q\in(3,\infty)$ and
$r\in(1,\infty)$. 
In particular, 
$\vvel$ satisfies~\eqref{el:reg.ss0} and \eqref{el:reg.ss1}
by Lemma~\ref{lem:wgradw},
and we have \eqref{el:reg.ss2}
by Lemma~\ref{lem:reg.initial}.
To conclude~\eqref{el:reg.pp},
note that 
Theorem~\ref{thm:embedding} implies
\eqref{el:w} and \eqref{el:gradw} 
for $q_0,q_1\in(3,\infty]$ and $r_0,r_1\in[1,\infty)$ ,
so that \eqref{el:wgradw} holds for $q_2\in(\frac{3}{2},\infty]$
and $r_2\in(1,\infty)$,
and i.~and iv.~of Lemma~\ref{lem:vgradwwgradv} yield
\eqref{el:vgradw} for
$q_3\in(\frac{6}{5},\infty)$ and $r_3\in(1,\infty)$,
and \eqref{el:wgradv} for $q_4\in(1,\infty)$ and
$r_4\in(1,\infty)$.
We thus have 
obtain
\eqref{el:rhs.tp}
for $q_5\in(\frac{3}{2},\infty)$ and $r_5\in(1,\infty)$,
and from Lemma~\ref{lem:OseenReg} we conclude
$\wvel\in\calw_{q,r}$ for all 
$q\in(\frac{3}{2},\infty)$ and $r\in(1,\infty)$. 
Repeating the argument once more, 
we finally obtain~\eqref{el:reg.pp}.
Moreover, \eqref{el:reg.pres}
is a direct consequence of 
\eqref{el:reg.ss2}--\eqref{el:reg.pp}
in virtue of
\eqref{sys:NavierStokesTP_sspart} 
and~\eqref{sys:NavierStokesTP_pppart}.

Finally, \eqref{el:reg:ss2.full}
follows from Theorem~\ref{thm:statNS} and the additional assumptions 
on $\partial\Omega$, $f$ and $\uvel_\ast$
since
\eqref{el:reg.pp}
implies that $\proj(\wvel\cdot\grad\wvel)\in\WSR{1}{q}(\Omega)$
for any $q\in(1,\infty)$.
\end{proof}

\begin{proof}[Proof of Theorem~\ref{thm:smooth}]
At first, we increase the time regularity of the solution 
inductively in steps of half a derivative. 
For $j\in\N$
let $\tuvel_j\coloneqq\sqrt{D}_t^j\uvel$
and $\twvel_j\coloneqq\sqrt{D}_t^j\wvel$.
We show that 
for every $j\in\N$
we have
\begin{equation}
\label{el:tuvel.reg0}
\begin{aligned}
\forall q,r\in(1,\infty): \ \quad
\twvel_j\in \WSR{1}{r}(\torus;\LR{q}(\Omega)^3)\cap\LR{r}(\torus;\WSR{2}{q}(\Omega)^3).
\end{aligned}
\end{equation}
By Theorem~\ref{thm:regularity}
there exists a pressure field $\upres$
such that $(\uvel,\upres)$ is a strong solution to~\eqref{sys:NavierStokesTP}
with the regularity stated in \eqref{el:reg.ss2}--\eqref{el:reg:ss2.full}.
In particular, this shows
\eqref{el:tuvel.reg0} for $j=0$.
Now assume that $\twvel_j$ has the asserted regularity
stated in \eqref{el:tuvel.reg0} for all $j\in\set{0,\ldots,k}$.
Then Theorem~\ref{thm:embedding} implies
\begin{equation}\label{el:tuvel.reg1}
\forall q,r\in(1,\infty):
\quad
\twvel_{j+1}
=\sqrt{D}_t \twvel_j
\in\LR{r}(\torus;\WSR{1}{q}(\Omega)^3)
\cap\LR{\infty}(\torus;\LR{\infty}(\Omega)^3)
\end{equation}
for $j=0,\dots,k$.
Let $\varphi\in \CRcisigma(\torus\times\Omega)$ 
and multiply \eqref{sys:NavierStokesTP.a}
by $\sqrt{D}_t^{k+1} \varphi$.
Since $\twvel_k=\tuvel_k$ for $k\geq1$,
after integrating by parts in space and time
as well as by means of~\eqref{eq:fracder.ibp},
we obtain 
\begin{equation}
\label{eq:weakform.pdt}
\begin{aligned}
\int_\torus\int_\Omega\bb{-\twvel_{k+1}\cdot\partial_t\varphi
+\grad\twvel_{k+1}:\grad\varphi
-\rey\partial_1\twvel_{k+1}\cdot\varphi
&+(\twvel_{k+1}\cdot\grad\twvel_{k+1})\cdot\varphi}\,\dx\dt
\\
&\qquad\quad
=\int_\torus\int_\Omega f_{k+1}
\cdot\varphi\,\dx\dt,
\end{aligned}
\end{equation}
where
\[
f_{k+1}\coloneqq 
\sqrt{D}_t^{k+1} f
+\twvel_{k+1}\cdot\grad\twvel_{k+1}
-\sqrt{D}_t^{k+1}\Div\np{\uvel\otimes\uvel}.
\]
In virtue of the smoothness of the boundary 
data and the regularity of $\twvel$,
we see that $\twvel$ is 
a weak solution to the Navier--Stokes equations~\eqref{sys:NavierStokesTP}
for the right-hand side $f_{k+1}$,
which is an element of $\LR{r}(\torus;\LR{q}(\Omega)^3)$
for all $q,r\in(1,\infty)$.
For the first two terms 
in the definition of $f_{k+1}$, 
this follows from the assumptions and 
from~\eqref{el:tuvel.reg1}.
For the term $\sqrt{D}_t^{k+1}\Div\np{\uvel\otimes\uvel}$
we distinguish two cases.

If $k=2N-1$ is an odd number, then
this term is an element of $\LR{r}(\torus;\LR{q}(\Omega))$
if and only if $\partial_i\partial_t^N\np{\uvel\otimes\uvel}$
is an element of $\LR{r}(\torus;\LR{q}(\Omega))$
for $i=1,2,3$.
We write
\[
\begin{aligned}
\partial_i\partial_t^N\np{\uvel\otimes\uvel}
=\sum_{\ell=0}^{N}\partial_i\partial_t^\ell\uvel\otimes\partial_t^{N-\ell}\uvel
\end{aligned}
\]
We can estimate the terms of this sum as
\[
\begin{aligned}
\norm{\partial_i\uvel\otimes\partial_t^{N}\uvel}_{\LR{r}(\torus;\LR{q}(\Omega))}
&\leq
\norm{\grad\uvel}_{\LR{\infty}(\torus;\LR{\infty}(\Omega))}
\norm{\twvel_{k+1}}_{\LR{r}(\torus;\LR{q}(\Omega))},
\\
\norm{\partial_i\partial_t^N\uvel\otimes\uvel}_{\LR{r}(\torus;\LR{q}(\Omega))}
&\leq
\norm{\grad\twvel_{k+1}}_{\LR{r}(\torus;\LR{q}(\Omega))}
\norm{\uvel}_{\LR{\infty}(\torus;\LR{\infty}(\Omega))},
\\
\norm{\partial_i\partial_t^\ell\uvel\otimes\partial_t^{N-\ell}\uvel}_{\LR{r}(\torus;\LR{q}(\Omega))}
&\leq
\norm{\grad\twvel_{2\ell}}_{\LR{r}(\torus;\LR{q}(\Omega))}
\norm{\twvel_{k+1-2\ell}}_{\LR{\infty}(\torus;\LR{\infty}(\Omega))},
\end{aligned}
\]
for $\ell=1,\dots,N-1$,
where the respective right-hand side is finite 
due to~\eqref{el:reg.ss1}, \eqref{el:reg.ss0},
\eqref{el:reg.pp} and the embedding Theorem~\ref{thm:embedding}
as well as 
\eqref{el:tuvel.reg1} for $j\leq k$.
If $k=2N$ is an even number, 
then $\sqrt{D}_t^{k+1}\Div\np{\uvel\otimes\uvel}\in\LR{r}(\torus;\LR{q}(\Omega)^3)$
if and only if this is true for
\[
\sqrt{D}_t\partial_t^N\Div\np{\uvel\otimes\uvel}
=\sum_{\ell=0}^N
\sqrt{D}_t\Div\bp{\partial_t^{\ell}\uvel\otimes\partial_t^{N-\ell}\uvel}.
\]
By Theorem~\ref{thm:embedding},
this is the case
if 
$\partial_t^{\ell}\uvel\otimes\partial_t^{N-\ell}\uvel
\in\WSR{1}{r}(\torus;\LR{q}(\Omega))\cap
\LR{r}(\torus;\WSR{2}{q}(\Omega))$
for $\ell=0,\dots,N$.
For example, for the terms with derivatives of highest order
we obtain
\[
\begin{aligned}
\norm{\uvel\otimes\partial_t^{N+1}\uvel}_{\LR{r}(\torus;\LR{q}(\Omega))}
&\leq
\norm{\uvel}_{\LR{\infty}(\torus;\LR{\infty}(\Omega))}
\norm{\partial_t\twvel_{k}}_{\LR{r}(\torus;\LR{q}(\Omega))},
\\
\norm{\partial_t^{\ell}\uvel\otimes\partial_t^{N+1-\ell}\uvel}_{\LR{r}(\torus;\LR{q}(\Omega))}
&\leq
\norm{\twvel_{2\ell}}_{\LR{r}(\torus;\LR{q}(\Omega))}
\norm{\partial_t\twvel_{k-2\ell}}_{\LR{\infty}(\torus;\LR{\infty}(\Omega))},
\\
\norm{\uvel\otimes\partial_t^{N}\grad^2\uvel}_{\LR{r}(\torus;\LR{q}(\Omega))}
&\leq
\norm{\uvel}_{\LR{\infty}(\torus;\LR{\infty}(\Omega))}
\norm{\grad^2\twvel_{k}}_{\LR{r}(\torus;\LR{q}(\Omega))},
\\
\norm{\partial_t^{N}\uvel\otimes\grad^2\uvel}_{\LR{r}(\torus;\LR{q}(\Omega))}
&\leq
\norm{\twvel_{k}}_{\LR{\infty}(\torus;\LR{\infty}(\Omega))}
\norm{\grad^2\uvel}_{\LR{r}(\torus;\LR{q}(\Omega))},
\\
\norm{\partial_t^{\ell}\uvel\otimes\partial_t^{N-\ell}\grad^2\uvel}_{\LR{r}(\torus;\LR{q}(\Omega))}
&\leq
\norm{\twvel_{2\ell}}_{\LR{r}(\torus;\LR{q}(\Omega))}
\norm{\grad^2\twvel_{k-2\ell}}_{\LR{\infty}(\torus;\LR{\infty}(\Omega))},
\end{aligned}
\]
which are all finite by the same argument as above.
Similarly, this follows for the lower-order terms.

In summary, we obtain $f_{k+1}\in\LR{r}(\torus;\LR{q}(\Omega)^3)$
for all $q,r\in(1,\infty)$ in both cases.
By \eqref{el:tuvel.reg1},
the function $\twvel_{k+1}$ is subject to both regularity 
assumptions \eqref{el:reg.fct} and \eqref{el:reg.grad},
and Theorem~\ref{thm:regularity}
implies
that $\twvel_{k+1}=\projcompl\twvel_{k+1}$
satisfies \eqref{el:tuvel.reg0}
for $j=k+1$.
We thus have shown \eqref{el:tuvel.reg0} for all~$j\in\N_0$.

To increase the spatial regularity,
we recall that $\np{\uvel,\upres}$ is a strong solution by 
Theorem~\ref{thm:regularity},
so that the $N$-th time derivative, $N\in\N_0$,
satisfies the Stokes system
\[
\begin{aligned}
-\Delta\partial_t^N\uvel+\grad\partial_t^N\upres
&= F_N
&&\qquad\tin\Omega, 
\\
\Div\partial_t^N\uvel&=0 
&&\qquad\tin\Omega, \\
\partial_t^N\uvel&=\partial_t^N\uvel_\ast 
&&\qquad\ton\partial\Omega
\end{aligned}
\]
a.e.~in $\torus$,
where 
\[
F_N\coloneqq
\partial_t^N f
-\partial_t^{N+1}\uvel+\rey\partial_1\partial_t^N\uvel-
\partial_t^N\bp{\uvel\cdot\grad\uvel}.
\]
Since $\partial_t^\ell\uvel=\partial_t^\ell\wvel$ for $\ell\geq 1$,
Theorem~\ref{thm:regularity}
and~\eqref{el:tuvel.reg0}
imply
$F_N\in\LR{r}(\torus;\WSR{1}{q}(\Omega_R)^3)$
for all $q,r\in(1,\infty)$ and all $R>0$ such that 
$\partial\Omega\subset\ball_R$,
where we define $\Omega_R\coloneqq\Omega\cap\ball_R$,
and $\ball_R\subset\R^3$ is the ball with radius $R$ and centered at $0\in\R^3$.
By a classical regularity result for the steady-state Stokes problem 
(see \cite[Theorem IV.5.1]{GaldiBookNew} for example),
we obtain
$\partial_t^N\uvel\in\LR{r}(\torus;\WSR{3}{q}(\Omega_R))$
for all $R>0$ sufficiently large and all $N\in\N_0$.
This implies $F_N\in\LR{r}(\torus;\WSR{2}{q}(\Omega_R))$,
and can again apply~\cite[Theorem IV.5.1]{GaldiBookNew}
to deduce $\partial_t^N\uvel\in\LR{r}(\torus;\WSR{4}{q}(\Omega_R))$.
Iterating this argument, we finally obtain
\[
\uvel\in\WSR{N}{r}(\torus;\WSR{M}{q}(\Omega_R))
\]
for all $N,M\in\N_0$, all $q,r\in(1,\infty)$ and all $R>0$ such that 
$\partial\Omega\subset\ball_R$.
This completes the proof.
\end{proof}

\smallskip\par\noindent
Weierstrass Institute for Applied Analysis and Stochastics\\ 
Mohrenstra\ss{}e 39, 10117 Berlin, Germany\\
Email: {\texttt{thomas.eiter@wias-berlin.de}}

\end{document}